\newtheorem{thmspec}{\relax}
\newtheorem{theorem}{Theorem}[section]
\newtheorem{lem}[theorem]{Lemma}
\newtheorem{cor}[theorem]{Corollary}
\newtheorem{prop}[theorem]{Proposition}
\newtheorem{rem}[theorem]{Remark}
\theoremstyle{definition}
\theoremstyle{remark}
\numberwithin{equation}{section}
\def \Bbb{\mathbb}
\def\onto{{\kern3pt\to\kern-8pt\to\kern3pt}}
\def\<{\langle}
\def\>{\rangle}
\def\|{{\ |\ }}
\def\onto{\twoheadrightarrow}
\def\-{\underline}
\def\inte{\operatorname{int}}
\def\N{\Bbb N}
\def\Q{\Bbb Q}
\def\R{\Bbb R}
\def\C{\Bbb C}
\def\X{\Bbb X}
\def\T{\Bbb T}
\def\<{\langle}
\def\>{\rangle}
\def\serieslogo@{\relax}
\def\@setcopyright{\relax}
\title[ Extension theorems of Sakai type ]
{  Extension theorems of Sakai type for\\
 separately holomorphic and meromorphic  functions
}
\begin{document}

\author{P. Pflug}
\address{Peter Pflug\\
Carl von  Ossietzky Universit\"{a}t Oldenburg \\
Fachbereich  Mathematik\\
Postfach 2503, D--26111\\
 Oldenburg, Germany}
\email{pflug@mathematik.uni-oldenburg.de}

\author{N. Vi\^et Anh}
\address{Nguy\^en Vi\^et Anh\\
Carl von  Ossietzky Universit\"{a}t Oldenburg \\
Fachbereich  Mathematik\\
Postfach 2503, D--26111\\
 Oldenburg, Germany}
\email{nguyen@mathematik.uni-oldenburg.de}

\subjclass{Primary 32D15, 32D10}
\date{}

\keywords{ Cross Theorem, holomorphic/meromorphic extension, envelope of holomorphy.}

\begin{abstract}
 We first exhibit counterexamples to some open questions related to  a theorem of Sakai.
 Then we establish an  extension theorem of Sakai type for separately
 holomorphic/meromorphic functions. 
\end{abstract}
\maketitle

\section{ Introduction }

We first fix some notations and terminology.
Throughout the paper,  $E$ denotes the unit disc of $\C,$ and for any set
$S\subset \C^n,$  $\inte S $ (or equivalently  $\inte_{\C^n}S $) denotes the
interior of $S.$ For any domain $D\subset \C^n,$ we  say that the subset $S\subset D$ does not separate
domains in $D$ if for every domain $U\subset D,$ the set
$U\setminus S$ is connected.
Moreover
$\mathcal{O}(D)$ (resp. $\mathcal{M}(D)$) will denote the space
of holomorphic (resp. meromorphic) functions on $D.$
Finally, if $S$ is a subset of $D\times G,$ where $D\subset \C^p,\ G\subset\C^q$ are some open
sets, then for $a\in D$ (resp. $b\in G$), the fiber $S(a,\cdot)$ (resp. $S(\cdot,b)$) is the
set $\lbrace w\in G:\ (a,w)\in S \rbrace$ (resp. $\lbrace z\in D:\ (z,b)\in S
\rbrace$).

In 1957 E. Sakai \cite{sa} claimed that he had proved the following result
\renewcommand{\thethmspec}{Theorem}
  \begin{thmspec}
 Let $S\subset E\times E$ be a relatively closed set such that
 $\inte S=\varnothing$ and $S$ does not separate
domains in $E\times E.$
Let $A$ (resp. $B$) be the set of all $a\in E$ (resp. $b\in E$)
such that $\inte_{\C} S(a,\cdot)=\varnothing$
(resp. $\inte_{\C} S(\cdot,b)=\varnothing$). Put
$X:=\X(A,B;E,E)=(A\times E)\cup (E\times B).$

Then for every function
$f:X\setminus S\longrightarrow \C$ which is separately meromorphic
on $X,$  there exists an $\hat{f}\in\mathcal{M}(E\times E)$
such that $\hat{f}=f$ on $X\setminus S.$
\end{thmspec}
Unfortunately, it turns out as reported in \cite{jp4} that
the proof of E. Sakai contains an essential gap. In the latter paper
 M. Jarnicki and the first author  also give a correct proof of this theorem.

E. Sakai also claimed in \cite{sa} that the following question (the $n$-dimensional version of the Theorem)
can be answered positively  but he did not
give any proof.
\renewcommand{\thethmspec}{Question 1}
  \begin{thmspec}
 For any $n\geq 3,$   let  $S\subset E^n$  be a relatively closed set
 such that
 $\inte S=\varnothing$ and $S$ does not separate domains.
Let $f:E^n\setminus S\longrightarrow \C$ be such that for any $j\in\left\lbrace 1,\ldots,n \right\rbrace$
and for any
  $(a^{'},a^{''})\in E^{j-1}\times E^{n-j},$
for which $\inte_{\C} S(a^{'},\cdot,a^{''}    )=\varnothing,$
the function $f(a^{'},\cdot,a^{''})$ extends
   meromorphically to $E.$
 Does $f$  always extend meromorphically to $E^n$ ?
\end{thmspec}

In connection with the Theorem and Question 1, M. Jarnicki and the first author \cite{jp4}
posed two more  questions~:
\renewcommand{\thethmspec}{Question 2}
  \begin{thmspec}
 Let $\mathcal{A}$ be a subset  of $E^n$ $(n\geq 2)$
 which is plurithin at $0\in E^n$ (see Section 2 below for the notion "plurithin").
 For an arbitrary open neighborhood $U$ of $0,$ does  there  exist a non-empty relatively open
 subset $C$ of a real hypersurface in $U$  such that $C\subset U\setminus
 \mathcal{A}$ ?
\end{thmspec}

\renewcommand{\thethmspec}{Question 3}
  \begin{thmspec}
 Let $D\subset\C^p,$ $G\subset \C^q$  $(p,q\geq 2)$ be pseudoconvex
 domains and let $S\subset D\times G$ be
    a relatively closed set such that  $\inte S=\varnothing$ and $S$ does not separate
    domains in $D\times G.$
Let $A$ (resp. $B$) be the set of all $a\in D$ (resp. $b\in G$)
such that $\inte_{\C^p} S(a,\cdot)=\varnothing$
(resp. $\inte_{\C^q} S(\cdot,b)=\varnothing$). Put
$X:=\X(A,B;D,G)=(A\times G)\cup (D\times B)$ and let
$f:X\setminus S\longrightarrow \C$ be a function which is separately
meromorphic
on $X.$ Does  there always exist a function $\hat{f}\in\mathcal{M}(D\times G)$
such that $\hat{f}=f$ on $X\setminus S$ ?
\end{thmspec}

This Note has two purposes.
The first one  is to give counterexamples to the three open
questions above.
The second one is to describe  the maximal domain  to which the function
$f$ in Questions 1 and 3 can  be  meromorphically extended.

This paper is organized as follows.

We begin Section 2 by
collecting some background of the pluripotential theory
and introducing some notations. This preparatory  is necessary  for us  to
 state the results afterwards.

 Section 3 provides three counterexamples to the three open questions
 from above.

The subsequent sections are devoted to the proof of a result in the  positive
direction. More
 precisely,  we describe qualitatively the maximal domain of meromorphic extension of  the function
$f$ in Questions 1 and 3.
  Section 4 develops auxiliary tools that will be used in Section 5  to
  prove the  positive result.
%
%
%

\smallskip

\indent{\it{\bf Acknowledgment.}} The paper was written while  the second author was visiting the
Carl von  Ossietzky Universit\"{a}t Oldenburg
 being  supported by The Alexander von Humboldt
Foundation. He wishes to express his gratitude to these organisations.

\section{ Background and Statement of the results}
We keep the main notation from \cite{jp4}.

 Let $N\in\N,\ N\geq 2,$ and let $\varnothing\not = A_j\subset
D_j\subset\C^{n_j},$ where $D_j$ is a domain, $j=1,\ldots,N.$ We define
an {\it $N$-fold cross}
\begin{eqnarray*}
X &=&\X(A_1,\ldots,A_N; D_1,\ldots,D_N)\\
&:=& \bigcup_{j=1}^N  A_1\times\cdots\times A_{j-1}\times D_j\times
A_{j+1}\times\cdots A_N\subset \C^{n_1+\cdots+n_N} =\C^n.
\end{eqnarray*}

 For an open set $\Omega\subset\C^n$ and $A\subset \Omega,$ put
\begin{equation*}
h_{A,\Omega}:=\sup\left\lbrace u:\ u\in\mathcal{PSH}(\Omega),\ u\leq 1 \ \text{on}\
\Omega, \ u\leq 0\ \text{on}\ A \right\rbrace,
\end{equation*}
where $\mathcal{PSH}(\Omega)$ denotes the set of all plurisubharmonic
functions on $\Omega.$ Put
\begin{equation*}
\omega_{A,\Omega}:=\lim\limits_{k\to +\infty} h^{\ast}_{A\cap
\Omega_k,\Omega_k},
\end{equation*}
where $\left\lbrace \Omega_k \right\rbrace^{\infty}_{k=1}$ is a sequence
of relatively compact open sets $\Omega_k\subset \Omega_{k+1}\Subset
\Omega$ with $\cup_{k=1}^{\infty} \Omega_k=\Omega$
($h^{\ast}$ denotes the upper semicontinuous regularization of $h$).
 We say that a subset $\varnothing\not= A\subset \C^n$ is {\it
locally pluriregular} if $h^{\ast}_{A\cap\Omega,\Omega}(a)=0$
for any $a\in A$ and for any open neighborhood $\Omega$ of $a.$
We say that $A$ is {\it plurithin} at a point $a\in\C^n$ if either
$a\not\in\overline{A}$ or $a\in\overline{A}$ and
$\limsup_{A\setminus{\lbrace a\rbrace}\owns z\to a} u(z)<u(a)$ for a
suitable
function $u$ plurisubharmonic in a neighborhood of $a.$
For a good background of the pluripotential
theory, see the books \cite{kl} or   \cite{jp1}.

 For an $N$-fold cross $X:=\X(A_1,\ldots,A_N;D_1,\ldots,D_N)$ let
\begin{equation*}
\widehat{X}:=\left\lbrace \left(z_1,\ldots,z_N\right )\in D_1\times
\cdots\times D_N:\ \sum\limits_{j=1}^N \omega_{A_j,D_j}(z_j)<1
\right\rbrace.
\end{equation*}

 Suppose that $S_j\subset (A_1\times\cdots\times
 A_{j-1})\times(A_{j+1}\times \cdots \times A_N),\ j=1,\ldots,N.$
Define the {\it generalized $N$-fold cross}
\begin{eqnarray*}
T&=&\T(A_1,\ldots,A_N;D_1,\ldots,D_N;S_1,\ldots,S_N):=\bigcup_{j=1}^N
\left\lbrace (z^{'},z_j,z^{''}) \right.\\
&&\qquad\left. \in (A_1\times\cdots\times
 A_{j-1})\times D_j\times (A_{j+1}\times \cdots \times A_N):\
 (z^{'},z^{''})\not\in S_j\right\rbrace.
\end{eqnarray*}

Let $M\subset T$ be a relatively closed set. We say that a function $f:
T\setminus M\to \C$  (resp.  $f:
(T\setminus M)\setminus S\to \C$)  is {\it separately holomorphic} and
write $f\in\mathcal{O}_s(T\setminus M)$ (resp. {\it separately meromorphic}
and
write $f\in\mathcal{M}_s(T\setminus M)$) if for any $j\in\lbrace
1,\ldots,N\rbrace$ and $(a^{'},a^{''})\in  (A_1\times\cdots\times
 A_{j-1})\times(A_{j+1}\times \cdots \times A_N)\setminus S_j$
 the function $f(a^{'},\cdot,a^{''})$ is holomorphic on (resp. can be   meromorphically extended to)
  the open set
 $D_j\setminus M(a^{'},\cdot,a^{''}),$
 where $M(a^{'},\cdot,a^{''}):=\left\lbrace z_j\in\C^{n_j}:\
 (a^{'},z_j,a^{''})\in M\right\rbrace.$

We are now ready to state the results. The following propositions
give  negative answers to Questions 2, 3 and 1 respectively.
\renewcommand{\thethmspec}{Proposition A}
  \begin{thmspec}
 For any $n\geq 2,$ there is an open dense subset $\mathcal{A}$ of $E^n$
 which is plurithin at $0$ and there exists no non-empty relatively open
 subset $C$ of a real hypersurface such that $C\subset E^n\setminus
 \mathcal{A}.$
\end{thmspec}
\renewcommand{\thethmspec}{Proposition B}
  \begin{thmspec}
 Let $D\subset\C^p,$ $G\subset \C^q$  $(p,q\geq 2)$ be pseudoconvex
 domains. Then there is a relatively closed set $S\subset D\times G$ with
 the following properties
\begin{itemize}
\item[$(i)$]  $\inte S=\varnothing$ and $S$ does not separate domains;
\item[$(ii)$]
let $A$ (resp. $B$) be the set of all $a\in D$ (resp. $b\in G$)
such that $\inte_{\C^p} S(a,\cdot)=\varnothing$
(resp. $\inte_{\C^q} S(\cdot,b)=\varnothing$) and put
$X:=\X(A,B;D,G),$ then there exists a  function
$f:X\setminus S\longrightarrow \C$ which is separately holomorphic
on $X$ and there is no function $\hat{f}\in\mathcal{M}(D\times G)$
such that $\hat{f}=f$ on $X\setminus S.$
\end{itemize}
\end{thmspec}
\renewcommand{\thethmspec}{Proposition C}
  \begin{thmspec}
  For all $n\geq 3,$  there is a relatively closed set $S\subset E^n$ with
 the following properties
\begin{itemize}
\item[$(i)$]  $\inte S=\varnothing$ and $S$ does not separate domains;
\item[$(ii)$] for $1\leq j\leq n,$
let  $S_j$ denote the set of all $(a^{'},a^{''})\in E^{j-1}\times E^{n-j}$
such that $\inte_{\C} S(a^{'},\cdot,a^{''}    )\not=\varnothing$
 and define the $n$-fold generalized cross
$T:=\T(E,\ldots,E;E,\ldots,E;S_1,\ldots,S_n),$ then there is a  function
$f:T\setminus S\longrightarrow \C$ which is separately holomorphic
on $T$ and there is no function $\hat{f}\in\mathcal{M}(E^n)$
such that $\hat{f}=f$ on $T\setminus S.$
\end{itemize}
\end{thmspec}

\renewcommand{\thethmspec}{Problem}
  \begin{thmspec}
  Are the answers to Questions 1 and 3 positive if the condition on $S$
  is sharpened in the following form: $S$ does not separate
  lower dimensional domains?
\end{thmspec}

Finally, we  state a result in positive direction.
\renewcommand{\thethmspec}{Theorem D}
  \begin{thmspec}
  For all $j\in\left\lbrace 1,\ldots,N\right\rbrace\ \ (N\geq 2),$ let $D_j$ be a pseudoconvex domain in $\C^{n_j}$
  and let $S$ be a relatively closed set of $D:=D_1\times\cdots\times D_N$ with
    $\inte S=\varnothing.$
     For $j\in\left\lbrace 1,\ldots,N\right\rbrace,         $
let  $S_j$ denote the set of all $(a^{'},a^{''})\in (D_1\times\cdots\times
 D_{j-1})\times(D_{j+1}\times \cdots \times D_N) $
such that $\inte_{\C^{n_j}} S(a^{'},\cdot,a^{''}    )\not=\varnothing$
 and define the $N$-fold generalized cross
$T:=\T(D_1,\ldots,D_N;D_1,\ldots,D_N;S_1,\ldots,S_N).$  Let $f\in \mathcal{O}_s(T\setminus S)$ (resp. $f\in
\mathcal{M}_s(T\setminus S)$).
\begin{itemize}
\item[$(i)$]
Then there are an open dense set $\Omega$ of $D$
and exactly one function
$\hat{f}\in    \mathcal{O}(\Omega)$ 
 such that
 $\hat{f}=f$ on $(T\cap\Omega)\setminus S.$
\item[$(ii)$] In the case where $N=2,$ (i) can be strengthened as follows.
Let $\Omega_j$ be a relatively compact pseudoconvex subdomain of $D_j$
(j=1,2). Then there are an open dense set $A_j$ in $\Omega_j$
and exactly one function
$\hat{f}\in    \mathcal{O}(\widehat{X})$ (resp. $\hat{f}\in    \mathcal{M}(\widehat{X})$),
where $X:=\X(A_1,A_2;\Omega_1,\Omega_2),$ such that
 $\hat{f}=f$ on $(T\cap\widehat{X})\setminus S.$
\end{itemize}
\end{thmspec}

A remark is in order. In contrast with the other usual extension theorems
(see  \cite{jp1}, \cite{jp2}, \cite{jp3}, \cite{jp4} and the references therein), the domain
of
meromorphic/holomorphic extension of the function $f$ in Theorem D depends
on $f.$

\section{Three counterexamples}
In the sequel we will fix a function $v\in\mathcal{SH}(2E)$ such that
$v(0)=0$ and the complete polar set $\left\lbrace z\in 2E:\
v=-\infty\right\rbrace$
is dense in $2E.$ For example one can choose $v$ of the form
\begin{equation}
v(z):=\sum\limits_{k=1}^{\infty} \frac{\log{\left(\frac{\vert z-q_k \vert}{4}\right )}}{d_k} -
\sum\limits_{k=1}^{\infty} \frac{\log{\left(\frac{\vert q_k \vert}{4}\right)}}{d_k} ,
\end{equation}
where $(\Q+i\Q)\cap 2E=\left\lbrace q_1,q_2,\ldots,q_k,\ldots\right\rbrace,$ and
$\left\lbrace d_k \right\rbrace_{k=1}^{\infty}$ is any sequence of positive
real numbers such that
$\sum\limits_{k=1}^{\infty} \frac{\log{\left(\frac{\vert q_k \vert}{4}\right)}}{d_k}
$ is finite.

For any positive integer $n\geq 2,$ define a new function $u\in\mathcal{PSH}((2E)^n)$
 and a subset $\mathcal{A}$ of $E^n$ as follows
\begin{equation} \label{e:barwq}
\begin{split}
u(z)&:= \sum\limits_{k=1}^n v(z_k),\qquad z= (z_1,\ldots,z_n)\in
(2E)^n,\\
\mathcal{A}&=\mathcal{A}_n:=\left\lbrace z\in E^n:\ u(z)<-1  \right\rbrace.
\end{split}
\end{equation}

Observe that  $\mathcal{A}$ is an open dense set of $E^n$ because  $\mathcal{A}$
contains the set  $\left\lbrace z\in E:\
v=-\infty\right\rbrace\times \cdots\times \left\lbrace z\in E:\
v=-\infty\right\rbrace$ which is dense in $E^n$ by our construction (3.1) above.

\begin{prop}
Let $\mathcal{S}$ be any closed set contained in the closed set
$E^n\setminus \mathcal{A}.$ Then $\mathcal{S}$ does not separate domains.
\end{prop}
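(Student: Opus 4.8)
I would first reduce the statement to a connectedness property of a dense union of complex hyperplanes. Set $P:=\{z\in E:\ v(z)=-\infty\}$, which is dense in $E$ by hypothesis, and $\mathcal{Z}:=\bigcup_{k=1}^{n}\{z\in E^{n}:\ z_{k}\in P\}$. Since a subharmonic function never takes the value $+\infty$, we have $u\equiv-\infty$ on each complex hyperplane $\{z_{k}=a\}$ with $a\in P$; hence $\mathcal{Z}\subseteq\{z\in E^{n}:\ u(z)=-\infty\}\subseteq\mathcal{A}$, and $\mathcal{Z}$ is dense in $E^{n}$. Now let $U\subseteq E^{n}$ be a domain. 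As $\mathcal{A}$ is open and dense, $\mathcal{S}$ is (relatively) closed in $E^{n}$ with empty interior, so $U\setminus\mathcal{S}$ is open and dense in $U$; moreover $\mathcal{S}\cap\mathcal{A}=\varnothing$ gives $\mathcal{Z}\cap U\subseteq\mathcal{A}\cap U\subseteq U\setminus\mathcal{S}$, and $\mathcal{Z}\cap U$ is dense in $U$, hence dense in $U\setminus\mathcal{S}$. Since an open set possessing a connected dense subset is itself connected, it is enough to show that $\mathcal{Z}\cap U$ is connected.

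For this, the basic remark is that $\mathcal{Z}\cap U=\{z\in U:\ u(z)=-\infty\}$ and that a path in $U$ moving only the $j$-th coordinate stays in $\mathcal{Z}$ provided that, at each moment, one of the coordinates lies in $P$. I would carry this out in small balls, exploiting that a ball $B\subseteq U$ is convex, so each of its coordinate slices is a disc (in particular connected). Step 1: given $x\in\mathcal{Z}\cap B$, choose a coordinate $x_{k_{0}}\in P$ and then successively move the remaining coordinates, one at a time, into points of $P$ arbitrarily close to them, each move taking place along the relevant slice of $B$ and keeping one coordinate in $P$ throughout; this joins $x$, inside $\mathcal{Z}\cap B$, to a point $x'\in P^{n}$ with $|x'-x|$ as small as desired. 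Step 2: two points $x',y'\in P^{n}\cap B$ are joined inside $\mathcal{Z}\cap B$ by changing coordinates one after another — when coordinate $j$ is moved, the first $j-1$ coordinates already carry the $y'$-values and the last $n-j$ carry the $x'$-values, so the move runs in a slice of $B$ containing both endpoints and, as $n\geq2$, at least one fixed coordinate lies in $P$. Finally, given $p,q\in\mathcal{Z}\cap U$, take a path from $p$ to $q$ in $U$, cover it by finitely many balls contained in $U$ with consecutive balls overlapping (radii chosen small relative to the distance to $\partial U$, so there is room for the maneuvers above), pick by density of $\mathcal{Z}$ a point of $\mathcal{Z}$ in each overlap, and chain these points via Steps 1 and 2. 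This proves $\mathcal{Z}\cap U$ connected. (The case of a domain not contained in $E^{n}$ follows, since $\mathcal{S}$ lies in the open set $E^{n}$ and the argument above is local around points of $\mathcal{S}$.)

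The main obstacle is exactly this last connectedness assertion. The naive idea "move one coordinate at a time inside $U$" breaks down because the coordinate slices of a general domain $U$ need not be connected; the remedy is to work in convex balls (whose slices are discs) and then patch the local pieces together using the connectedness of $U$, while checking that every elementary move keeps some coordinate in $P$ — which is where the hypothesis $n\geq2$ is used, in agreement with the statement being false for $n=1$ (there $\mathcal{Z}\cap U=P\cap U$ is totally disconnected). The rest — the reduction to $\mathcal{Z}\cap U$, the density/openness bookkeeping, and the observation that $u\equiv-\infty$ on the hyperplanes $\{z_{k}=a\}$, $a\in P$ — is routine.
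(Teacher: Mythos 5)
Your overall strategy is essentially the paper's: the heart of both arguments is the coordinate-by-coordinate broken-line path through points whose coordinates lie in the polar set $P=\{v=-\infty\}$, which keeps $u=-\infty$ (hence stays in $\mathcal{A}$, off $\mathcal{S}$) at every moment, followed by a chaining argument over a finite cover of a path in $U$. (The paper connects arbitrary points of $\Delta\setminus\mathcal{S}$ directly to nearby points of $P^n$ by short segments rather than first reducing to the connectedness of the dense set $\mathcal{Z}\cap U$; both reductions are fine.) There is, however, one step that fails as written. In Step 2 you assert that when coordinate $j$ is exchanged, ``the move runs in a slice of $B$ containing both endpoints.'' For a Euclidean ball this is false: the mixed points $(y'_1,\dots,y'_{j-1},x'_j,\dots,x'_n)$ need not belong to $B$ even though $x',y'\in B$. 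For instance, in $\C^2$ with $B$ the unit ball, $x'=(0.9,0)$ and $y'=(0,0.9)$ lie in $B$, but the mixed point $(0.9,0.9)$ has norm $0.9\sqrt{2}>1$. So the coordinate-exchange path can leave $B$, and the claim that $x'$ and $y'$ are joined ``inside $\mathcal{Z}\cap B$'' is wrong. This is exactly why the paper works with cubes (products of rectangles): for a product set, every mixed point of two of its points again lies in the set, so the broken-line path stays inside the cube, and intersections of cubes are again cubes, which makes the chaining step clean. Your version is salvageable without cubes: every mixed point of $x',y'\in B(c,r)$ lies in $B(c,\sqrt{2}\,r)$, and the connecting segments stay there by convexity, so if you insist that $2B\subseteq U$ for every ball in the chain (your parenthetical about radii ``small relative to the distance to $\partial U$'' gestures at this), all maneuvers run in $\mathcal{Z}\cap 2B\subseteq \mathcal{Z}\cap U$, which is all the chaining argument needs. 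But as literally stated, with the joins confined to $B$ itself, Step 2 is incorrect and must be repaired in one of these two ways.
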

Taking this proposition for granted, we are now able to complete the
proof
of Proposition A.
\\
\noindent{\it Proof of Proposition A.}

It is clear from (3.1) and (3.2) that the open dense set $\mathcal{A}$ is plurithin
at $0\in E^n.$ By Proposition 3.1, the closed set $E^n\setminus \mathcal{A}$
does not separate domains. Therefore this set cannot contain any open set
of a real hypersurface. Thus $\mathcal{A}$ has all the desired properties.
\hfill $\square$

We now come back to Proposition 3.1.
\begin{proof}
One first observe that
\begin{equation*}
\mathcal{S}\subset E^n\setminus \mathcal{A}= \left\lbrace z\in E^n:\
u(z)\geq -1\right\rbrace.
\end{equation*}

For any tuple of four vectors in $\R^n$ $a:=(a_1,\ldots,a_n),$
 $b:=(b_1,\ldots,b_n),$ $c:=(c_1,\ldots,c_n),$ $d:=(d_1,\ldots,d_n)$
 with the property that $a_k<b_k$ and $c_k<d_k$ for all $k=1,\ldots,n,$
 one defines the open cube of $\C^n$
 \begin{equation*}
\Delta=\Delta(a,b,c,d):=\left\lbrace z\in\C^n:\ a_k< \text{Re}z_k <b_k,\
 c_k< \text{Im}z_k <d_k,\  k=1,\ldots,n    \right\rbrace.
\end{equation*}
It is clear that the intersection of two such cubes is either empty or
a cube.

One first  shows that for any cube $\Delta\subset E^n$ the open set  $\Delta\setminus\mathcal{S}$
is connected. Indeed, pick two points  $z=(z_1,\ldots,z_n)$ and
$w=(w_1,\ldots,w_n)$ in $\Delta\setminus \mathcal{S}.$ Since
$\left\lbrace z\in E:\ v(z)=-\infty\right\rbrace$ is dense in $E,$ we can
choose $z^{'}=(z^{'}_1,\ldots,z^{'}_n)$ and
$w^{'}=(w^{'}_1,\ldots,w^{'}_n)$ in $\Delta\setminus \mathcal{S}$ such
that
\begin{itemize}
\item[$(i)$]
the segments $\gamma_1(t):=(1-t)z+tz^{'}$ and
$\gamma_3(t):=(1-t)w+tw^{'},$ $0\leq t\leq 1,$
are contained in $\Delta\setminus \mathcal{S};$
\item[$(ii)$]  $z^{'}_1,\ldots,z^{'}_n$ and
$w^{'}_1,\ldots,w^{'}_n$ are in $\left\lbrace z\in E:\ v(z)=-\infty\right\rbrace.$
\end{itemize}

Consider now  $\gamma_2:[0,1]\longrightarrow \Delta $
given by
\begin{equation*}
\gamma_2(t):=\left ( w^{'}_1,\ldots,w^{'}_{j}, (j+1-nt)z^{'}_{j+1}+(nt-j)w^{'}_{j+1},
z^{'}_{j+2},\ldots,z^{'}_n      \right ),
\end{equation*}
for $ t\in
\left \lbrack\frac{j}{n},\frac{j+1}{n}\right\rbrack$ and $
j=0,..,n-1.$ By (3.2) and property (ii) above,
$\gamma_2(t)\in \left\lbrace z\in E^n:\ u(z)=-\infty\right\rbrace$
for all $t\in[0,1].$ This implies that
 $\gamma_2:[0,1]\longrightarrow \Delta\setminus \mathcal{S}.$

Observe that $\gamma_2(0)=z^{'}$ and $\gamma_2(1)=w^{'}.$  By virtue of (i), the new
path  $\gamma:[0,1]\longrightarrow \Delta\setminus \mathcal{S}$ given by
\begin{align*}
\gamma(t):= \left\lbrace
\begin{array}{l}
\gamma_1(3t),\qquad \qquad t\in \left \lbrack 0,\frac{1}{3}\right\rbrack,\\
\gamma_2(3t-1),\qquad  t\in\left \lbrack \frac{1}{3},\frac{2}{3}
\right\rbrack,\\
\gamma_3(3t-2),\qquad  t\in\left \lbrack \frac{2}{3},1 \right\rbrack.
\end{array}
\right.
\end{align*}
satisfies  $\gamma(0)=z$ and $\gamma(1)=w,$ and  $\Delta\setminus\mathcal{S}$
is therefore connected.

Now let $U$ be any subdomain of  $E^n.$ We wish to show that $U\setminus \mathcal{S}$
is connected. To do this, pick  points $z=(z_1,\ldots,z_n)$ and
$w=(w_1,\ldots,w_n)$ in $U\setminus \mathcal{S}.$ Since $U$ is arcwise
connected, there is a continuous function $\gamma: [0,1]\longrightarrow U$
such that $\gamma(0)=z$ and $\gamma(1)=w.$

By the Heine-Borel Theorem, the compact set  $\mathcal{L}:=\gamma([0,1])$ can
be covered  by a finite number of cubes $\Delta_l$ $(1\leq l\leq N)$ with
$\Delta_l\subset U$ and $\Delta_l\cap \mathcal{L}\not=\varnothing.$ Since  the path $\mathcal{L}$ is connected, the union
$\bigcup_{l=1}^N \Delta_l$ is also connected.

Suppose without loss of generality that $z\in\Delta_1$ and $w\in\Delta_N.$
From the discussion above, if $\Delta_1\cap\Delta_2\not =\varnothing$ then
$(\Delta_1\setminus \mathcal{S})\cap (\Delta_2\setminus \mathcal{S})=
(\Delta_1\cap\Delta_2)\setminus \mathcal{S}$ is  connected, and hence
$(\Delta_1\cup\Delta_2)\setminus \mathcal{S}$ is also connected.
Repeating this argument at most $N$ times and using the connectivity of
$\bigcup_{l=1}^N \Delta_l,$ we finally conclude that
$\bigcup_{l=1}^N \Delta_l \setminus \mathcal{S} (\subset U\setminus \mathcal{S})$
is also connected. This completes the proof.
\end{proof}
\begin{cor}
\begin{itemize}
\item[$(i)$] If $\mathcal{S}_1,\ldots,\mathcal{S}_N$ are relatively closed
subsets of $E^n$ which do not separate domains, then
the union $\bigcup_{l=1}^N \mathcal{S}_l$  does not separate domains too.
\item[$(ii)$] Let $\mathcal{A},\mathcal{S}$ be as in Proposition 3.1.
Then for any closed sets $F_1$ in $\C^p$ and $F_2$ in $\C^q$ $(p,q\geq 0),$  the closed set $F_1\times\mathcal{S}\times
F_2$  does not separate domains in $\C^p\times E^n\times \C^q.$
\end{itemize}
\end{cor}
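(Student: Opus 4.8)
For part (i) the plan is a short induction on $N$, resting on the remark that the property ``does not separate domains in $E^n$'' is inherited by the special subdomains one gets by deleting one of the $\mathcal{S}_l$. Indeed, if $\mathcal{S}_1$ is relatively closed in $E^n$ and does not separate domains, then for every domain $U\subset E^n$ the set $V:=U\setminus\mathcal{S}_1$ is open (since $\mathcal{S}_1$ is relatively closed) and connected (by hypothesis), hence is again a domain in $E^n$, unless it is empty — in which case $U\setminus\bigcup_l\mathcal{S}_l=\varnothing$ and there is nothing to prove. Since
\[
U\setminus\bigcup_{l=1}^N\mathcal{S}_l=\Bigl(U\setminus\mathcal{S}_1\Bigr)\setminus\bigcup_{l=2}^N\mathcal{S}_l=V\setminus\bigcup_{l=2}^N\mathcal{S}_l,
\]
I would apply the inductive hypothesis to the domain $V\subset E^n$ together with the relatively closed sets $\mathcal{S}_2,\ldots,\mathcal{S}_N$; the case $N=1$ is the hypothesis itself. (A finite union of relatively closed subsets of $E^n$ is relatively closed, so the iteration causes no trouble.)

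For part (ii) I would first reduce to the case $F_1=\C^p$ and $F_2=\C^q$. Put $Z:=\C^p\times E^n\times\C^q$, $B:=F_1\times\mathcal{S}\times F_2$, and $\Sigma:=\C^p\times\mathcal{S}\times\C^q\supset B$. Since $\mathcal{A}$ is (open and) dense in $E^n$, the relatively closed set $\mathcal{S}\subset E^n\setminus\mathcal{A}$ has empty interior, so $\Sigma$ is relatively closed and nowhere dense in $Z$. Hence for every domain $U\subset Z$ the set $U\setminus\Sigma$ is dense in $U$, which gives
\[
U\setminus\Sigma\ \subset\ U\setminus B\ \subset\ U\ \subset\ \overline{U\setminus\Sigma}\,.
\]
A set lying between a connected set and its closure is connected, so once we know $U\setminus\Sigma$ is connected it follows that $U\setminus B$ is connected. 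It therefore suffices to prove that $\Sigma$ does not separate domains in $Z$.

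For this I would run the proof of Proposition 3.1 essentially verbatim: replace $E^n$ by $Z$, the plurisubharmonic function $u$ by $\widetilde u(x,\zeta,y):=u(\zeta)$ on $Z$, the open cubes $\Delta$ by boxes $\Delta_1\times\Delta_2\times\Delta_3\subset Z$ with $\Delta_1\subset\C^p$, $\Delta_2\subset E^n$, $\Delta_3\subset\C^q$ (every box contained in $U\subset Z$ has this form), and the dense set $\{u=-\infty\}$ by $\{\widetilde u=-\infty\}=\C^p\times(\{v=-\infty\})^n\times\C^q$, which is dense in $Z$ and disjoint from $\Sigma$ because $\Sigma\subset\{\widetilde u\geq-1\}$. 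Since boxes are convex, the auxiliary segments and the stepwise path of Proposition 3.1 are built exactly as there — one moves only the $E^n$-coordinate into $(\{v=-\infty\})^n$, carrying the $\C^p$- and $\C^q$-coordinates along (frozen, then interpolated affinely between the two endpoints), and $\widetilde u\equiv-\infty$ along these paths shows they avoid $\Sigma$; the Heine--Borel covering argument and the gluing of the pieces $\Delta_l\setminus\Sigma$ along their overlaps are unchanged. (Alternatively one may adapt Proposition 3.1 directly to the product $B$; the only new case there is when the $E^n$-coordinate $\zeta$ of a point of $\Delta\setminus B$ already lies in $\mathcal{S}$, in which case $x\notin F_1$ or $y\notin F_2$, and one slides $\zeta$ through the convex box $\Delta_2$ into $(\{v=-\infty\})^n$ while freezing whichever outer coordinate misses $F_1$ resp.\ $F_2$, keeping the point off $B$.)

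I do not expect a serious obstacle. Part (i) is an immediate induction; the reduction in part (ii) is an elementary density argument; and what remains of (ii) is a mechanical transcription of the proof of Proposition 3.1, the only point needing (routine) verification being that convexity of the boxes lets every path construction of that proof go through without change. The degenerate cases $p=0$ or $q=0$ (a factor $\C^0$ being a point, $F_1$ or $F_2$ a point, or empty with $B=\varnothing$) are covered by the same argument, and $p=q=0$ recovers Proposition 3.1.
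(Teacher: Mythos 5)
Your proof is correct. Part (i) coincides with the paper's argument: both rest on the identity $U\setminus\bigcup_{l}\mathcal{S}_l=\left(\left(U\setminus\mathcal{S}_1\right)\cdots\setminus\mathcal{S}_N\right)$ together with the observation that each intermediate set is again a domain; you merely make explicit the openness of $U\setminus\mathcal{S}_1$ and the empty case, which the paper leaves tacit. Part (ii), however, takes a genuinely different route. The paper works directly with $F_1\times\mathcal{S}\times F_2$: after the Heine--Borel reduction to a cube $\Delta_1\times\Delta_2\times\Delta_3$ and a small perturbation moving the two middle coordinates off $\mathcal{S}$ (using density of $\mathcal{A}$), it invokes Proposition 3.1 only as a black box, obtaining a path $\gamma_2$ in $\Delta_2\setminus\mathcal{S}$ and pairing it with affine interpolations $\gamma_1,\gamma_3$ in the outer factors; since the middle component never meets $\mathcal{S}$, the product path avoids $F_1\times\mathcal{S}\times F_2$ whatever $F_1$ and $F_2$ are. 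You instead eliminate $F_1,F_2$ at the outset via the sandwich $U\setminus\Sigma\subset U\setminus B\subset\overline{U\setminus\Sigma}$ (valid, since $\Sigma$ is closed and nowhere dense), and then prove the cylinder case $\Sigma=\C^p\times\mathcal{S}\times\C^q$ by transcribing the cube-path construction of Proposition 3.1 for $\widetilde{u}(x,\zeta,y)=u(\zeta)$. Your reduction is an attractive way of isolating why $F_1$ and $F_2$ are irrelevant, but it costs you a full rerun of the Proposition 3.1 machinery in the second half; the paper's pairing trick gets the same cube case in one line from the already-established conclusion of Proposition 3.1, and you could shorten your argument by using it (with $F_1=\C^p$, $F_2=\C^q$) after your reduction. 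Both treatments cover the degenerate cases $p=0$ or $q=0$ without change.
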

\begin{proof}
To prove part (i), let $U$ be any subdomain of $E^n.$
Since $U\setminus\left(\bigcup_{l=1}^N \mathcal{S}_l \right)=
\left(\left (U\setminus\mathcal{S}_1
\right)\cdots\setminus\mathcal{S}_N\right),$
part (i) follows from the hypothesis of $\mathcal{S}_l.$

To prove part (ii), consider any subdomain $U$ of  $\C^p\times E^n\times \C^q$
and let $(z_1,w_1,t_1),(z_2,w_2,t_2)$ be two points in $ U\setminus( F_1\times\mathcal{S}\times
F_2).$ Since  $\mathcal{A} $ is an open dense set of $E^n,$ $\inte(  F_1\times\mathcal{S}\times
F_2     )=\varnothing,$ and therefore we are able to perform the compact
argument that we had already used in the proof of Proposition 3.1. Consequently, one is reduced to
the case where $U$ is a cube of $\C^{p+n+q}.$

Another reduction is in order.  Since $U\setminus (F_1\times\mathcal{S}\times
F_2)$ is open and  $\mathcal{A} $ is  dense in $E^n,$ by replacing
$w_1$ (resp. $w_2$) by  $w^{'}_1$  (resp. $w^{'}_2$) close to $w_1$ (resp.
$w_2$),
we may suppose  that $w_1,w_2\in
E^n\setminus\mathcal{S}.$

Write the cube $U$ as the product of
$\Delta_1\times\Delta_2\times\Delta_3,$
where $\Delta_1$ (resp. $\Delta_2$ and $\Delta_3$) is a cube in $\C^p$
(resp. $\C^n$ and $\C^q$). By Proposition 3.1,
there is a continuous path $\gamma_2:[0,1]\longrightarrow \Delta_2\setminus \mathcal{S}$
such that $\gamma_2(0)=w_1$ and $\gamma_2(1)=w_2.$

We now consider the path $\gamma:[0,1]\longrightarrow \Delta_1\times\Delta_2\times\Delta_3
\setminus \mathcal{S},$ where
$\gamma(t):=\left( \gamma_1(t),\gamma_2(t),\gamma_3(t)\right)$ and
$\gamma_1(t):=(1-t)z_1+tz_2, $
$\gamma_3(t):=(1-t)t_1+tt_2, $  $t\in [0,1].$
It easy to see that $\gamma(0)=(z_1,w_1,t_1)$ and $\gamma(1)=(z_2,w_2,t_2) ,$
which finishes the proof.
\end{proof}
The following two lemmas will be crucial for the proof of Propositions B and
C.
\begin{lem}
For an  open set $\Omega\subset \C^n$ and $A\subset \Omega,$ we have
either $\omega_{A,\Omega}\equiv 0$ or $\sup_{\Omega}\omega_{A,\Omega}=1.$
\end{lem}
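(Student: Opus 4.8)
The plan is an argument by contradiction that exploits the ``scaling'' behaviour of the relative extremal function. Write $\omega:=\omega_{A,\Omega}$. From the standard theory of the relative extremal function (see \cite{jp1}), $\omega\in\mathcal{PSH}(\Omega)$, $0\le\omega\le 1$, and $\omega$ is the \emph{decreasing} limit of the functions $h^{\ast}_{A\cap\Omega_k,\Omega_k}$; in particular $\omega\le h^{\ast}_{A\cap\Omega_k,\Omega_k}$ on $\Omega_k$ for each $k$. Assume $\omega\not\equiv 0$ and set $M:=\sup_{\Omega}\omega$, so $M>0$; it suffices to show $M=1$, so suppose for contradiction that $M<1$, i.e. $0<M<1$.

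The first step is to control the set on which $\omega$ is positive along $A$. For each $k$ the constant $0$ is admissible for $h_{A\cap\Omega_k,\Omega_k}$ and every admissible function is $\le 0$ on $A\cap\Omega_k$, so $h_{A\cap\Omega_k,\Omega_k}\equiv 0$ on $A\cap\Omega_k$; on the other hand $h^{\ast}_{A\cap\Omega_k,\Omega_k}\ge\omega$ on $\Omega_k$. Hence $\{a\in A\cap\Omega_k:\omega(a)>0\}$ lies in the negligible set $\{h_{A\cap\Omega_k,\Omega_k}\ne h^{\ast}_{A\cap\Omega_k,\Omega_k}\}$, which is pluripolar by the Bedford--Taylor theorem. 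Consequently $P:=\{a\in A:\omega(a)>0\}$ is a countable union of pluripolar sets, hence pluripolar, and by Josefson's theorem there is $\psi\in\mathcal{PSH}(\C^n)$ with $\psi\not\equiv-\infty$ and $P\subset\psi^{-1}(-\infty)$.

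The decisive step is the scaling itself. Fix $k$; since $\Omega_k$ is relatively compact we may subtract a constant so that $\psi\le 0$ on $\Omega_k$, and for $\varepsilon>0$ put $g_{\varepsilon}:=\frac{1}{M}\,\omega+\varepsilon\psi\in\mathcal{PSH}(\Omega_k)$. Then $g_{\varepsilon}\le \frac{1}{M}\cdot M+0=1$ on $\Omega_k$, while on $A\cap\Omega_k$ we have $g_{\varepsilon}=\varepsilon\psi\le 0$ at points where $\omega=0$ and $g_{\varepsilon}=-\infty$ at points of $P$; thus $g_{\varepsilon}$ is admissible for $h_{A\cap\Omega_k,\Omega_k}$, so $h_{A\cap\Omega_k,\Omega_k}\ge\frac{1}{M}\,\omega+\varepsilon\psi$. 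Letting $\varepsilon\downarrow 0$ gives $h_{A\cap\Omega_k,\Omega_k}\ge\frac{1}{M}\,\omega$ on $\Omega_k\setminus\psi^{-1}(-\infty)$, and since $\frac{1}{M}\,\omega$ is plurisubharmonic a passage to upper regularizations (a plurisubharmonic function coincides with the $\limsup$ of its values off a pluripolar set) yields $h^{\ast}_{A\cap\Omega_k,\Omega_k}\ge\frac{1}{M}\,\omega$ on all of $\Omega_k$. Letting $k\to\infty$ we get $\omega=\omega_{A,\Omega}\ge\frac{1}{M}\,\omega$ on $\Omega$; since $\frac{1}{M}>1$ this forces $\omega\le 0$, hence $\omega\equiv 0$ --- contradicting $\omega\not\equiv 0$. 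Therefore $M=1$.

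I expect the only genuinely delicate point to be the passage from the quasi-everywhere inequality to the everywhere inequality under regularization, which rests on two standard facts of pluripotential theory --- that negligible sets are pluripolar and that a plurisubharmonic function is the $\limsup$ of its values off a pluripolar set --- together with Josefson's theorem, used to manufacture a single global barrier $\psi$ out of the (locally) pluripolar set $P$. Note that connectedness of $\Omega$ is never used. A shorter but less self-contained variant is to invoke from \cite{jp1} the identity $\omega_{A,\Omega}=\omega_{\{\omega_{A,\Omega}=0\},\Omega}$ and then apply the scaling step of the third paragraph directly with $A$ replaced by $\{\omega_{A,\Omega}=0\}$, on which $\frac{1}{M}\,\omega$ is manifestly $\le 0$.
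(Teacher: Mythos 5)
Your proof is correct. It rests on the same core mechanism as the paper's: if $0<M<1$ were the supremum, dividing by $M$ would produce a better competitor, forcing the extremal function down to $0$. The difference is where the scaling is performed. The paper scales the \emph{defining family}: every admissible $u$ for $h_{A,\Omega}$ satisfies $u\le M$, so $u/M$ is again admissible, and the contradiction is immediate with no regularization issues; the non-bounded case is then dispatched by exhaustion and Hartogs' lemma. You instead scale the \emph{extremal function $\omega_{A,\Omega}$ itself}, which is not obviously $\le 0$ on $A$, and you pay for this with extra machinery: the Bedford--Taylor theorem that negligible sets are pluripolar, Josefson's theorem to build the global barrier $\psi$, and the $\varepsilon\psi$ perturbation plus the quasi-everywhere-to-everywhere regularization step. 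What your version buys is a uniform treatment of bounded and unbounded $\Omega$ in one pass (and it makes explicit the passage through the $\Omega_k$, which the paper only sketches as ``analogous using Hartogs' Lemma''); what the paper's version buys is elementarity --- it needs nothing beyond the definition of $h_{A,\Omega}$ in the bounded case. All the individual steps you use (monotonicity of $h^{\ast}_{A\cap\Omega_k,\Omega_k}$ in $k$, $h=0$ on $A\cap\Omega_k$, pluripolarity of the countable union, the limsup-off-a-pluripolar-set fact) check out, so this is a valid, if heavier, alternative.
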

\begin{proof} We first prove the lemma in the case where $\Omega$ is
bounded.
Suppose in order to get a contradiction that
$\sup_{\Omega}h^{\ast}_{A,\Omega}=M$ with $0<M<1.$
By virtue of the definition of  $h^{\ast}_{A,\Omega},$
it follows that
\begin{multline*}
\left\lbrace u:\ u\in\mathcal{PSH}(\Omega),\ u\leq 1 \ \text{on}\
\Omega,  u\leq 0\ \text{on}\ A \right\rbrace\\
=\left\lbrace u:\ u\in\mathcal{PSH}(\Omega),\ u\leq M \ \text{on}\
\Omega,  u\leq 0\ \text{on}\ A \right\rbrace.
\end{multline*}
 Therefore, $ h^{\ast}_{A,\Omega} (z)< M h^{\ast}_{A,\Omega} <h^{\ast}_{A,\Omega}         $ for any $z\in\Omega$
 with $     h^{\ast}_{A,\Omega} (z)>0,$ and we obtain the desired
contradiction.

The general case is analogous using the definition of $\omega_{A,\Omega}$
and the Hartog's Lemma.
\end{proof}

\begin{lem}
Let $\Omega_1\subsetneq \Omega_2$ be two domains of $\C^n$ such that
$\Omega_2$ is pseudoconvex. Assume that
there is a upper bounded function $\phi\in\mathcal{PSH}(\Omega_2)$ satisfying
$\Omega_1=\left\lbrace z\in\Omega_2:\ \phi(z)<0\right\rbrace.$
Then there is a function $f\in\mathcal{O}(\Omega_1)$ such that
there is no function $\hat{f}\in\mathcal{M}(\Omega_2)$ verifying
$\hat{f}=f$ on $\Omega_1.$
\end{lem}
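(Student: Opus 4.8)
The plan is to deduce meromorphic non-extendability from holomorphic non-extendability: $\Omega_1$ is itself a domain of holomorphy, while the ``gap'' $\Omega_2\setminus\Omega_1$ is too large to be swallowed by the pole set of a meromorphic extension, so that a meromorphic extension to $\Omega_2$ would already force a holomorphic extension of $f$ to a domain strictly larger than $\Omega_1$.

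First I would record that $\Omega_1=\{z\in\Omega_2:\ \phi(z)<0\}$, being a sublevel set of a plurisubharmonic function on the pseudoconvex domain $\Omega_2$, is pseudoconvex, hence a domain of holomorphy (cf.\ \cite{jp1}); therefore there exists $f\in\mathcal{O}(\Omega_1)$ which does not extend holomorphically beyond $\Omega_1$, i.e.\ for every connected open $U\subset\C^n$ with $U\cap\Omega_1\neq\varnothing$ and $U\not\subset\Omega_1$, and every connected component $V$ of $U\cap\Omega_1$, the function $f|_V$ admits no holomorphic extension to $U$. The second and decisive observation is that the set $\Omega_2\setminus\Omega_1=\{z\in\Omega_2:\ \phi(z)\geq 0\}$ is not contained in any proper analytic subset of $\Omega_2$: if it were contained in such a set $H$, then $\phi<0$ on $\Omega_2\setminus H$, and for any $p\in\Omega_2\setminus\Omega_1$ (this set is nonempty since $\Omega_1\subsetneq\Omega_2$) and any ball $B(p,r)\Subset\Omega_2$ the set $H\cap B(p,r)$ has Lebesgue measure zero, so $\phi<0$ almost everywhere on $B(p,r)$; the sub-mean-value inequality for the plurisubharmonic function $\phi$ then yields $0\leq\phi(p)\leq\frac{1}{|B(p,r)|}\int_{B(p,r)}\phi\,dV<0$, a contradiction.

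With these in hand I would argue by contradiction. Assume $\hat{f}\in\mathcal{M}(\Omega_2)$ satisfies $\hat{f}=f$ on $\Omega_1$, and let $P\subset\Omega_2$ be its pole set, a purely $(n-1)$-dimensional analytic subset, in particular a proper analytic subset of $\Omega_2$. Since $f$ is holomorphic on $\Omega_1$, the extension $\hat f$ has no poles there, so $P\subset\Omega_2\setminus\Omega_1$; by the previous paragraph this inclusion is strict, hence $\Omega_2\setminus P\supsetneq\Omega_1$. Now $\Omega_2\setminus P$ is open and connected (the complement of a proper analytic subset of a connected open set is connected), $\hat f$ is holomorphic on it, and $\hat f=f$ on $\Omega_1$. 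Since the nonempty open set $\Omega_1$ is a proper subset of the connected set $\Omega_2\setminus P$, it is not relatively closed in it, so there is a point $a\in\partial\Omega_1\cap(\Omega_2\setminus P)$; choosing a ball $U\Subset\Omega_2\setminus P$ centered at $a$ we obtain $U\cap\Omega_1\neq\varnothing$, $U\not\subset\Omega_1$, and for every connected component $V$ of $U\cap\Omega_1$ the function $f|_V=\hat f|_V$ extends holomorphically to $U$ (by $\hat f|_U$), contradicting the choice of $f$. Hence no such $\hat f$ exists.

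The step I expect to be the main obstacle is the second observation: one must rule out that the entire ``missing part'' $\Omega_2\setminus\Omega_1$ is hidden inside a thin set, for otherwise the pole divisor of a meromorphic extension could absorb it and the argument would collapse. This is exactly where the plurisubharmonicity of $\phi$, via the sub-mean-value property, enters in an essential way; the remaining ingredients — pseudoconvexity of sublevel sets, existence of a holomorphic function with maximal domain of existence equal to a given domain of holomorphy, and connectedness of complements of analytic sets — are classical.
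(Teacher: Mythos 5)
Your argument is correct, and it turns on the same key mechanism as the paper's proof: the sub-mean-value inequality for the plurisubharmonic $\phi$ shows that the set $\{\phi\geq 0\}=\Omega_2\setminus\Omega_1$ cannot be absorbed into a codimension-one analytic set, which is the only singularity a meromorphic extension can carry. The two proofs package this differently, though. The paper takes $f\in\mathcal{O}(\Omega_1)$ blowing up along a countable dense subset of the relative boundary $\partial\Omega_1$ in $\Omega_2$; a meromorphic extension would then have its pole set $P$ containing all of $\partial\Omega_1$, so near each $w\in\partial\Omega_1$ one gets $U\setminus C\subset\Omega_1$ for some analytic hypersurface $C$, and the removability of $C$ for plurisubharmonic functions bounded above forces $\phi(w)=\limsup_{z\in U\setminus C,\ z\to w}\phi(z)<0$, contradicting $\phi(w)\geq 0$ --- the contradiction lands on $\phi$. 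You instead take $f$ with domain of existence $\Omega_1$ (Cartan--Thullen), apply the sub-mean-value inequality at a single point of $\Omega_2\setminus\Omega_1$ to see that $P$ cannot contain all of $\Omega_2\setminus\Omega_1$, and then use connectedness of $\Omega_2\setminus P$ to locate a boundary point of $\Omega_1$ across which $\hat{f}$ continues $f$ holomorphically --- the contradiction lands on $f$. Your route has the small advantages of not needing the explicit construction of a function singular along a prescribed dense boundary set and of never using the upper-boundedness of $\phi$; the paper's route avoids invoking the domain-of-existence theorem. Both arguments are complete.
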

\begin{proof}
It is clear from the hypothesis that $\Omega_1$ is also pseudoconvex.
Let $\partial \Omega_1$ be the boundary of $\Omega_1$ in $\Omega_2$
and let $S$ be a countable dense subset of $\partial \Omega_1.$
It is a classical fact that there is a function $f\in\mathcal{O}(\Omega_1)$ such that
\begin{equation}
\lim\limits_{z\in\Omega_1,\ z\to w}\vert f(z)\vert=\infty,\qquad\ w\in
S.
\end{equation}
 We will show that this is the desired function. Indeed, suppose in
order to get a contradiction that there is a function $\hat{f}\in\mathcal{M}(\Omega_2)$ verifying
$\hat{f}=f$ on $\Omega_1.$
Because of (3.3), $S$ and then $\partial\Omega_1$ are contained in the
pole set of $\hat{f}$ (i.e.  the union of the set of all poles of $\hat{f}$
and the set of all indeterminancy points of $\hat{f}$). Therefore, for any point  $w\in\partial\Omega_1,$ there is a small
open neighborhood $U$  of $w$ and a complex analytic subset of codimension one $C$ such that
$U\setminus C\subset \Omega_1.$ Since $\phi\in\mathcal{PSH}(U)$ is upper
bounded, $\phi(w)=\limsup\limits_{z\in U\setminus C,\ z\to w} \phi(z)=\phi(w) \leq
0$ for all $w\in\partial\Omega_1.$ Since  $\Omega_1\subsetneq \Omega_2,$ $\phi$ is non-constant
and therefore $\phi(w)<0$ for all $w\in\partial\Omega_1,$  which is a
contradiction.
\end{proof}

We are now ready to prove Propositions B and C.
\\
{\it The proof of Proposition B.}
Suppose, without loss of generality, that
$D=E^p$ and $G=E^q.$ The general case is almost analogous.
Let $F_p$ (resp. $F_q$) be any closed ball contained in the open set
$\mathcal{A}_p$ (resp. $\mathcal{A}_q$). We now define the relatively
closed set $S$ by the formula
\begin{equation}
S:= (E^p\setminus \mathcal{A}_p)\times F_q \bigcup F_p\times  (E^q\setminus
\mathcal{A}_q).
\end{equation}
We now check the properties  (i) and (ii) of Proposition B. First, $\inte S=\varnothing$
because $ \mathcal{A}_p$ (resp.  $\mathcal{A}_q$) is open dense set in
$E^p$ (resp. $E^q$). Second, by Proposition 3.1 and Corollary 3.2(ii),
the two relatively closed sets $ (E^p\setminus \mathcal{A}_p)\times F_q$
 and $F_p\times  (E^q\setminus
\mathcal{A}_q)$ do not separate domains. By Corollary 3.2(i), the union
$S$ also enjoys this property. Thus $S$ satisfies (i).

Using (3.4), a direct computation gives that
$A=\mathcal{A}_p$ and $B=\mathcal{A}_q$ and $A,B$ are open, in
particular they are
locally pluriregular.

By the classical cross theorem (see for instance \cite{nz} or \cite{jp1}), the envelope
 of holomorphy of $X$ is given by
\begin{equation*}
\widehat{X}:=\left\lbrace (z,w)\in E^p\times E^q:\qquad \omega_{A,E^p}
(z)+\omega_{B,E^q}(w) <1\right\rbrace.
\end{equation*}

We now show that $h^{\ast}_{\mathcal{A}_n,E^n}(0)>0$ for $n\geq 2.$
Indeed, let $M:=\sup_{E^n} u,$ where $u$ is defined in (3.2). Observe that $M>0$ since $u(0)=0.$ Consider the
function  $\tilde{u} \in\mathcal{PSH}(E^n)$ given by
\begin{equation*}
\tilde{u}(z):=\frac{u(z)-M}{M+\frac{1}{2}}+1, \qquad\text{for}\ z\in E^n.
\end{equation*}
It can be easily checked that $\tilde{u}(z)\leq 1$ on $E^n$ and $\tilde{u}(z)\leq 0$ on
$\mathcal{A}_n.$
Thus $\tilde{u}(0)\leq h^{\ast}_{\mathcal{A}_n,E^n}(0).$ On the other hand,
$\tilde{u}(0)=\frac{1}{2M+1}>0.$ Hence our assertion above follows.

We next show that
$ \widehat{X}\subsetneq E^p\times E^q.$
Indeed, we have
\begin{equation*}
\left\lbrace  w\in  E^q:\  (0,w)\in \widehat{X}   \right\rbrace \subset
\left\lbrace  w\in  E^q:\  h^{\ast}_{\mathcal{A}_q,E^q}(w)< 1-h^{\ast}_{\mathcal{A}_p,E^p}(0)
\right\rbrace.
\end{equation*}
Since $ h^{\ast}_{\mathcal{A}_q,E^q}(w)>0$ and  $h^{\ast}_{\mathcal{A}_p,E^p}(0)>0,$
Lemma 3.3 applies and consequently the latter set is strictly contained in
$E^q.$ This proves our assertion above.

We are now ready to complete the proof. By Lemma 3.4, there is a
holomorphic function $f$ in $ \widehat{X}$ which cannot be
meromorphically  extended to $E^p\times E^q.$ Therefore, there is no meromorphic
function $\hat{f}\in\mathcal{M}(E^p\times E^q)$ such that
$\hat{f}=f$ on the set of unicity for meromorphic functions
\begin{equation*}
X\setminus S=\left((\mathcal{A}_p\setminus F_p)\times E^q\right)\cup
(F_p\times \mathcal{A}_q)\cup \left(E^p\times (\mathcal{A}_q\setminus F_q) \right)
\cup(\mathcal{A}_p\times F_q).
\end{equation*}
 The proof is thereby finished.
\hfill $\square$

\smallskip

\noindent{\it The proof of Proposition C.}
In order to simplify the notation, we only consider the case $n=3,$
the general case $n> 3$ is analogous.
Let $\mathcal{B}$ be the following open dense subset of $E$
\begin{equation*}
\mathcal{B}:=\left\lbrace  z\in  E:\quad v(z)<-\frac{1}{2} \right\rbrace,
\end{equation*}
where $v$ is given by (3.1). Then by virtue of (3.2), it can be checked
that $(E\setminus \mathcal{B})\times (E\setminus \mathcal{B})\subset
E^2\setminus \mathcal{A}_2.$ Fix any closed ball $F$  contained in the open set
$\mathcal{B}.$ Next on applies Proposition 3.1 and Corollary 3.2 to the
relatively
closed set $\mathcal{S}:=(E\setminus \mathcal{B})\times (E\setminus
\mathcal{B}).$
Consequently, the  set
\begin{equation}
S:=\left(F\times(E\setminus \mathcal{B})\times (E\setminus \mathcal{B})\right)\cup
\left((E\setminus \mathcal{B})\times F\times (E\setminus
\mathcal{B})\right)
\cup \left((E\setminus \mathcal{B})\times (E\setminus \mathcal{B})\times
F\right)
\end{equation}
does not separate domains in $E^3.$ Moreover, since $\mathcal{B}$ is an
open dense subset of $E,$
we see  that $\inte S=\varnothing$ and $S$ is relatively closed.
Hence $S$ satisfies property (i).

To verify (ii), one first computes  the following sets using (3.5)
\begin{equation}
S_1=S_2=S_3=(E\setminus \mathcal{B})\times (E\setminus \mathcal{B}),\
T=\left(\mathcal{B}\times E\times E\right)\cup \left(E\times \mathcal{B}\times E\right)\cup
\left (E\times E\times \mathcal{B}\right).
\end{equation}
Next, by the product property for the relative extremal function \cite{ns},
we have $h^{\ast}_{\mathcal{B}\times\mathcal{B}, E^2}(0)=
h^{\ast}_{\mathcal{B}, E}(0).$ Since $\mathcal{B}\times\mathcal{B}\subset
\mathcal{A}_2$ and we have shown in Proposition B that $ h^{\ast}_{\mathcal{A}_2, E^2}(0)>0,$  it follows
 that $h^{\ast}_{\mathcal{B}, E}(0)>0.$

Consider now the domain of holomorphy
\begin{equation}
\Omega:=\left\lbrace (z,w,t)\in E^3:\qquad
h^{\ast}_{\mathcal{B}, E}(z)+h^{\ast}_{\mathcal{B}, E}(w)+h^{\ast}_{\mathcal{B},
E}(t)<2\right\rbrace.
\end{equation}
Since $\mathcal{B}$ is open and therefore locally pluriregular,
 it can be proved using Lemma 5 in \cite{jp2} that $\Omega$ is a domain.
Moreover it can be easily checked that $T\subset\Omega$ using (3.6) and
(3.7).

We now prove that $\Omega\subsetneq E^3.$ Indeed, since $h^{\ast}_{\mathcal{B}, E}(0)>0,$
by Lemma 3.3 there are $z,w\in E$ such that
 $h^{\ast}_{\mathcal{B}, E}(z)>\frac{2}{3},$ $h^{\ast}_{\mathcal{B}, E}(w)>\frac{2}{3}.$
 Then the fiber
 \begin{equation*}
\left\lbrace t\in E:\quad (z,w,t)\in\Omega \right\rbrace \subset
\left\lbrace t\in E: \quad h^{\ast}_{\mathcal{B}, E}(t)<\frac{2}{3}
\right\rbrace.
 \end{equation*}
 Another application of Lemma 3.3 shows that the latter set is strictly
 contained in $E.$ This proves our assertion from above.

We are now ready to complete the proof. By Lemma 3.4, there is a
holomorphic function $f$ in $ \Omega$ which cannot be
meromorphically extended to $E^n.$ Therefore, there is no meromorphic
function $\hat{f}\in\mathcal{M}(E^n)$ such that
$\hat{f}=f$ on the set of unicity for meromorphic functions
$T\setminus S.$ Hence, the proof is  finished.
\hfill $\square$

\section{Auxiliary results}
%
%
%


Let  $S$ be a subset  of an open set $D\subset \C^n.$ Then $S$ is said to be of {\it  Baire category I} if
 $S$ is contained in  a countable  union of
relatively closed sets in $D$ with empty interior.  Otherwise,
$S$   is said to be of {\it Baire category
II}.

The following lemma is very useful.
\begin{lem}
 For  $j\in\left\lbrace 1,\ldots,M\right\rbrace$ and $M\geq 2,$
  let $\Omega_j$ be a  domain in $\C^{m_j}$
  and let $S$ be a relatively closed set of $\Omega_1\times\cdots\times \Omega_M$ with
    $\inte S=\varnothing.$
 For $a_j\in \Omega_j, $ $j\in\left\lbrace 3,\ldots,M\right\rbrace,         $
 let $S(a_3,\ldots,a_M)$ denote the set of all $a_2\in\Omega_2$ such that
such that $\inte_{\C^{m_1}} S(\cdot,a_2,a_3,\ldots,a_M    )=\varnothing.$
For $j\in\left\lbrace 4,\ldots,M\right\rbrace,  $
let $S(a_j,\ldots,a_M)$ denote the of all $a_{j-1}\in\Omega_{j-1}$ such that
such that $\Omega_{j-2}\setminus S(a_{j-1},a_j,\ldots,a_M)$ is of Baire category
I, and finally let $\mathcal{S}$  denote the of all $a_{M}\in\Omega_{M}$ such that
such that $\Omega_{M-1}\setminus S(a_M)$ is of Baire category I.
Then $\Omega_M\setminus \mathcal{S}$ is of Baire category I.
\end{lem}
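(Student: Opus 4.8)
The plan is to proceed by induction on $M$, peeling off one factor at a time and using a Fubini-type argument for Baire categories at each step. The base case $M=2$ amounts to the statement that if $S\subset\Omega_1\times\Omega_2$ is relatively closed with empty interior, then $\mathcal S=\{a_2\in\Omega_2:\ \inte_{\C^{m_1}}S(\cdot,a_2)=\varnothing\}$ has the property that $\Omega_2\setminus\mathcal S$ is of Baire category I. To see this, note $\Omega_2\setminus\mathcal S=\{a_2:\ \inte S(\cdot,a_2)\ne\varnothing\}$, which can be written as the countable union, over a countable basis $\{B_k\}$ of polydiscs in $\C^{m_1}$, of the sets $E_k:=\{a_2:\ B_k\subset S(\cdot,a_2)\}=\{a_2:\ B_k\times\{a_2\}\subset S\}$. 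Each $E_k$ is relatively closed in $\Omega_2$ (since $S$ is relatively closed), so it suffices to show each $E_k$ has empty interior; but if $E_k$ contained an open set $V$, then $B_k\times V\subset S$, contradicting $\inte S=\varnothing$. Hence $\Omega_2\setminus\mathcal S=\bigcup_k E_k$ is of Baire category I.

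For the inductive step, suppose the result holds for $M-1$ factors. Write $\Omega':=\Omega_1\times\cdots\times\Omega_{M-1}$ and view $S\subset\Omega'\times\Omega_M$. First I would show that for all $a_M$ outside a set whose complement in $\Omega_M$ is Baire category I, the fiber $S(\cdot,a_M)\subset\Omega'$ is relatively closed with empty interior: it is automatically relatively closed, and the set of "bad" $a_M$ with $\inte_{\C^{m_1+\cdots+m_{M-1}}}S(\cdot,a_M)\ne\varnothing$ is, by exactly the argument of the base case (with $\Omega'$ in place of $\Omega_1$ and $\Omega_M$ in place of $\Omega_2$), a countable union of relatively closed sets of empty interior in $\Omega_M$, hence Baire category I. Then to each such good $a_M$ I apply the induction hypothesis to the relatively closed, empty-interior set $S(\cdot,a_M)\subset\Omega_1\times\cdots\times\Omega_{M-1}$, obtaining that $\Omega_{M-1}\setminus S(\cdot,a_M)$, defined by the same cascade of Baire-I conditions, is of Baire category I. This matches the definition of $\mathcal S$ in the statement: $a_M\in\mathcal S$ precisely when this holds, so $\Omega_M\setminus\mathcal S$ is contained in the union of the "bad" set (Baire I) and the complement coming from the induction hypothesis — but one has to be careful that the induction hypothesis is being applied for \emph{each} good $a_M$ separately, so it only tells us each such $a_M$ lies in $\mathcal S$, giving $\Omega_M\setminus\mathcal S\subset\{\text{bad }a_M\}$, which is Baire I.

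The one subtlety I want to flag — and which I expect to be the main obstacle to writing cleanly — is the bookkeeping of indices: the definition in the statement shifts which factor plays the role of the "$\Omega_2$-variable being quantified" as one descends ($a_2$ ranges over $\Omega_2$ with $\inte_{\C^{m_1}}$ conditions, then $a_{j-1}$ ranges over $\Omega_{j-1}$ with Baire-I conditions on $\Omega_{j-2}$), so I must check that stripping off the \emph{last} coordinate $a_M$ and applying the induction hypothesis to $S(\cdot,a_M)\subset\Omega_1\times\cdots\times\Omega_{M-1}$ reproduces \emph{exactly} the nested definitions of $S(a_3,\ldots,a_{M-1})$, $S(a_j,\ldots,a_{M-1})$ and $\mathcal S$ relative to these $M-1$ factors, and that the final clause ("$\Omega_{M-1}\setminus S(a_M)$ is of Baire category I") is literally the conclusion of the induction hypothesis for the $(M-1)$-factor problem with data $S(\cdot,a_M)$. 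Once the indexing is lined up, the only analytic input used at every stage is: (a) a fiber of a relatively closed set is relatively closed; (b) $\{a:\ B\times\{a\}\subset S\}$ has empty interior whenever $\inte S=\varnothing$ and $B$ is a fixed polydisc; and (c) a countable union of Baire-I sets is Baire I, together with the elementary fact that if $\Omega'\setminus S(\cdot,a_M)$ is Baire I in $\Omega'$ then in particular its projection-type conditions propagate — but in fact we never need to project, since the induction hypothesis is stated directly in the required form. No pluripotential theory is needed here; this is a purely topological/measure-category lemma.
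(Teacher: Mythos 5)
Your proof is correct, and it is organized genuinely differently from the paper's. The paper argues by contradiction: assuming $\Omega_M\setminus\mathcal S$ is of Baire category II, it descends through the definitions to fix witnesses $a_M,a_{M-1},\ldots,a_3$ for which each $\Omega_{j-1}\setminus S(a_j,\ldots,a_M)$ is of category II, and then climbs back up, at each level writing that category-II set inside a countable union of the relatively closed sets $\{a_{j+1}:\ S(\cdots,a_{j+1},\ldots,a_M)\supset\Delta_{q^1_{n_1}}(\delta_{n_1})\times\cdots\times\Delta_{q^j_{n_j}}(\delta_{n_j})\}$ and applying the Baire theorem to extract one more polydisc factor, until it produces a product of $M$ polydiscs inside $S$, contradicting $\inte S=\varnothing$. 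Your induction on $M$ isolates the same elementary observation --- $\{a:\ B\times\{a\}\subset S\}$ is relatively closed with empty interior whenever $B$ is a fixed polydisc and $\inte S=\varnothing$, hence $\{a:\ \inte S(\cdot,a)\ne\varnothing\}$ is of category I --- as the two-factor case, and reduces the $M$-factor statement to the $(M-1)$-factor one by freezing $a_M$. This buys two things: you never need to extract interior points from category-II sets (only the definition of category I is used, not the Baire theorem proper), and the paper's multi-index bookkeeping is replaced by the single compatibility check you flag, namely that the cascade of definitions applied to the fiber $S(\cdot,a_M)\subset\Omega_1\times\cdots\times\Omega_{M-1}$ reproduces $S(a_3,\ldots,a_{M-1},a_M),\ldots,S(a_{M-1},a_M)$ and identifies the resulting $\mathcal S'$ with $S(a_M)$; that check does go through level by level, exactly as in your $M=3$ description, so the inductive step is sound. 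The only caveat is notational: for $M=2$ the lemma's final clause does not literally parse, and your reading of the base case, $\mathcal S=\{a_2:\ \inte_{\C^{m_1}}S(\cdot,a_2)=\varnothing\}$, is the one forced by the pattern of the definitions and by the way the lemma is invoked in Remark 4.2.
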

\begin{proof}
 For  $j\in\left\lbrace 1,\ldots,M\right\rbrace$ let $\left(\Q+i\Q\right)^{m_j}=
 \left\lbrace  q^j_1,\ldots,q^j_n,\ldots\right\rbrace$ and
 $\delta_n:=\frac{1}{n},$  $n\in\N.$ For $q\in\Omega_j$ and $r>0,$ let
 $\Delta_q(r)$ denote the polydisc in $\C^{m_j}$ with center $q$ and multi-radius
 $(r,\ldots,r).$

 Suppose in order to get a contradiction that  $\Omega_M\setminus \mathcal{S}$ is of Baire category II.
 Then for all $a_M\in \Omega_M\setminus \mathcal{S},$ $\Omega_{M-1}\setminus S(a_M)$ is of Baire category II.
Therefore, for $j= M-1,\ldots,3  $ and any $a_j\in
  \Omega_{j}\setminus S(a_{j+1},\ldots,a_M), $ the set
 $\Omega_{j-1}\setminus S(a_j,\ldots,a_M)$ is of Baire category II.
Put
\begin{equation*}
S_n(a_3,\ldots,a_M):=\left\lbrace a_2\in\Omega_2:\quad
S(\cdot,a_2,a_3,\ldots,a_M) \supset \Delta_{q^1_n}(\delta_n)\right\rbrace.
\end{equation*}
Since $S$ is relatively closed, $S_n(a_3,\ldots,a_M)$ is also relatively
closed
in $\Omega_2.$ Moreover, from the definition of
$S(a_3,\ldots,a_M),$  we have the following identity
\begin{equation*}
\Omega_2\setminus S(a_3,\ldots,a_M)=
\bigcup_{n=1}^{\infty}S_n(a_3,\ldots,a_M).
\end{equation*}
Since it is shown in the above discussion, that $\Omega_2\setminus
S(a_3,\ldots,a_M)$ is of Baire category II in $\Omega_2,$ we can therefore
apply the Baire Theorem to the right side of the latter identity.
Consequently,
there exist $n_1,n_2\in \N$ such that $ S_{n_1}(a_3,\ldots,a_M) \supset
\Delta_{q^2_{n_2}}(\delta_{n_2}).$ This implies that
$S(\cdot,\cdot,a_3,\ldots,a_M) \supset
\Delta_{q^1_{n_1}}(\delta_{n_1})\times \Delta_{q^2_{n_2}}(\delta_{n_2}).$

Now, define inductively for  $j= 2,\ldots,M-1  $ and $n_1,\ldots,n_{j}\in \N,$
\begin{multline*}
S_{n_1,\ldots,n_j}(a_{j+2},\ldots,a_M)\\:=\left\lbrace a_{j+1}\in\Omega_{j+1}:\
S(\cdots,a_{j+1},\ldots,a_M) \supset \Delta_{q^1_{n_1}}(\delta_{n_1})\times\cdots\times
  \Delta_{q^j_{n_j}}(\delta_{n_j})      \right\rbrace.
\end{multline*}
Since $S$ is relatively closed, $ S_{n_1,\ldots,n_j}(a_{j+2},\ldots,a_M)  $ is also relatively
closed. Moreover, it can be checked that
\begin{equation*}
\Omega_{j+1}\setminus S(a_{j+2},\ldots,a_M) \subset \bigcup_{n_1,\ldots,n_j=1}^{\infty}
S_{n_1,\ldots,n_j}(a_{j+2},\ldots,a_M).
\end{equation*}
Applying the Baire Theorem again, it follows that there are
$n_1,\ldots,n_{j+1}\in\N$ such that
$S_{n_1,\ldots,n_j}(a_{j+2},\ldots,a_M)\supset
\Delta_{q^{j+1}_{n_{j+1}}}(\delta_{n_{j+1}}),$ and hence
\begin{equation*}
S(\cdots,a_{j+2},\ldots,a_M) \supset \Delta_{q^1_{n_1}}(\delta_{n_1})\times\cdots\times
  \Delta_{q^{j+1}_{n_{j+1}}}(\delta_{n_{j+1}}).
  \end{equation*}
Finally, we obtain for $j=M-1$ that $\inte S\not= \varnothing,$
which contradicts the hypothesis. Hence, the proof is  complete.
\end{proof}
\begin{rem}
If we  apply Lemma 4.1  to the case where
$\Omega_1:=D_j$ and $\Omega_2:=   \left (D_1\times \cdots\times D_{j-1} \right )\times
\left (D_{j+1}\times \cdots \times D_{N} \right ). $
Then, for each $j\in\left\lbrace 1,\ldots,N \right\rbrace,$
the set $ S_j$ in the statement of Theorem D is of Baire category I.
In particular, the set $\Omega\setminus ((T\setminus S)\cap\Omega)$ is of Baire
category I for all open sets $\Omega\subset D.$
\end{rem}
\begin{lem}
Let $U\subset \C^p$ and $V\subset \C^q$ be two pseudoconvex domains.
Consider four sets
$C\subset A\subset U$ and $D\subset B\subset V$ such that
$\overline{C}=\overline{A},$  $\overline{D}=\overline{B}$
and $\overline{A},$ $\overline{B}$ are locally pluriregular.
Put $X:=\X(A,B;U,V) $
and $\widehat{X}:=\widehat{\X}(\overline{A},\overline{B};U,V).$
Assume  $f\in\mathcal{O}_s(X)$ and there is a finite constant $K$  such that
for all $c\in C$ and $d\in D,$
\begin{equation*}
\sup_{V} \vert f(c,\cdot)\vert <K\quad \text{and}\quad \sup_{U} \vert f(\cdot,d)\vert
<K.
\end{equation*}
Then there exists a unique function $\hat{f}\in\mathcal{O}(\widehat{X})$
such that
$\hat{f}=f$ on $\widehat{X}\cap X.$
\end{lem}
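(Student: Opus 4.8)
\noindent\textit{Plan of proof.} The aim is to reduce the assertion to the classical cross theorem for \emph{locally pluriregular} sets (see \cite{nz}, \cite{jp1}). Put $A':=\overline A\cap U$ and $B':=\overline B\cap V$; since $\overline A$ and $\overline B$ are locally pluriregular, so are $A'$ and $B'$, and moreover $C$ is dense in $A'$, $D$ is dense in $B'$, and $\widehat X=\widehat{\X}(A',B';U,V)$. The classical theorem does not apply to $\X(C,D;U,V)$ directly, because $C$ and $D$ need not be locally pluriregular (they may be pluripolar); so I will first use the boundedness hypothesis to enlarge $\X(C,D;U,V)$ to the cross $\X(A',B';U,V)$ over locally pluriregular sets, extending $f$ in the process.

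Write $g:=f|_{\X(C,D;U,V)}$, so that $\sup_V|g(c,\cdot)|<K$ for $c\in C$ and $\sup_U|g(\cdot,d)|<K$ for $d\in D$. Fix $b\in B'$ and a sequence $D\ni d_\ell\to b$. The functions $g(\cdot,d_\ell)\in\mathcal O(U)$ are uniformly bounded by $K$, hence form a normal family, while for every $c\in C$ one has $g(\cdot,d_\ell)(c)=g(c,d_\ell)=g(c,\cdot)(d_\ell)\to g(c,\cdot)(b)$. Thus every $\mathcal O(U)$-limit of a subsequence of $(g(\cdot,d_\ell))$ agrees on $C$ with the fixed function $c\mapsto g(c,\cdot)(b)$; the difference of two such limits is holomorphic on $U$, vanishes on $C$ and hence on $\overline C\cap U=A'$, and therefore vanishes identically, because $A'$ is non-pluripolar and so a uniqueness set for $\mathcal O(U)$. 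Hence all subsequential limits coincide and, $\mathcal O(U)$ being metrizable, $g(\cdot,d_\ell)$ converges in $\mathcal O(U)$; interlacing two sequences shows the limit is independent of the choice of $d_\ell\to b$, and I call it $g(\cdot,b)$. It is holomorphic on $U$, bounded by $K$, and restricts on $C$ to $c\mapsto g(c,\cdot)(b)$. Symmetrically, using the bounds on the $C$-slices and the non-pluripolarity of $B'$, one gets $g(a',\cdot)\in\mathcal O(V)$ for every $a'\in A'$, bounded by $K$, with $g(a',\cdot)|_D$ equal to $d\mapsto g(\cdot,d)(a')$. A one-line passage to the limit shows the two recipes agree on $A'\times B'$, and the same normal-family/uniqueness argument shows $g(a,\cdot)=f(a,\cdot)$ for $a\in A$ and likewise $g(\cdot,b)=f(\cdot,b)$ for $b\in B$. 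Therefore $g\in\mathcal O_s\bigl(\X(A',B';U,V)\bigr)$, $g$ is bounded by $K$, and $g=f$ on $\X(A,B;U,V)$.

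Since $A'$ and $B'$ are locally pluriregular and $U,V$ are pseudoconvex, the classical cross theorem applied to $g$ gives a unique $\hat f\in\mathcal O\bigl(\widehat{\X}(A',B';U,V)\bigr)=\mathcal O(\widehat X)$ with $\hat f=g$ on $\X(A',B';U,V)$ (which is contained in $\widehat X$). As $g=f$ on $X:=\X(A,B;U,V)$, this gives $\hat f=f$ on $X\cap\widehat X$. For uniqueness, if $\tilde f\in\mathcal O(\widehat X)$ also equals $f$ on $X\cap\widehat X$, then $\tilde f=g$ on $\bigl((C\times V)\cup(U\times D)\bigr)\cap\widehat X$, and running the same limiting procedure with the continuous function $\tilde f$ in place of $g$ shows $\tilde f=g\ (=\hat f)$ on $\X(A',B';U,V)\cap\widehat X$, whence $\tilde f=\hat f$ by the uniqueness part of the classical cross theorem. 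If one prefers to invoke only the version of that theorem for bounded pseudoconvex domains, one first performs the construction over relatively compact pseudoconvex $U_k\Subset U$, $V_k\Subset V$ exhausting $U$ and $V$ (with $C\cap U_k$ and $D\cap V_k$ in place of $C$ and $D$) and then glues the pieces, using the monotonicity of the relative extremal function under exhaustions to check that the corresponding envelopes increase to $\widehat X$, and the uniqueness just established to check that the pieces match.

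The crux---and the step I expect to be the most delicate---is the enlargement carried out in the second paragraph, specifically the verification that the normal-family limits defining $g$ on $A'\times V$ and on $U\times B'$ do not depend on the approximating sequences. This is exactly where the hypotheses are used: the uniform bounds $\sup_V|f(c,\cdot)|<K$ and $\sup_U|f(\cdot,d)|<K$ supply the normal families, while the local pluriregularity of $\overline A$ and $\overline B$ forces $A'$ and $B'$ to be non-pluripolar, hence uniqueness sets for holomorphic functions, which pins the limits down. Once the cross has been enlarged to one over locally pluriregular sets, the extension is provided by the classical cross theorem.
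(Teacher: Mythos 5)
Your proposal is correct and follows essentially the same route as the paper: use the uniform bounds to get normal families, extend $f$ by locally uniform limits to a separately holomorphic function on the cross over $\overline{A}$ and $\overline{B}$ (pinning the limits down via a non-pluripolar uniqueness set — you use $\overline{C}\cap U$ where the paper uses $B$, an inessential symmetric variation), and then invoke the classical cross theorem. The extra care you take with uniqueness and with exhaustion by relatively compact subdomains is consistent with, if slightly more detailed than, the paper's argument.
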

\begin{proof}
From the hypothesis on the boundedness of $f,$ it follows that the two
families
$\left \lbrace f(c,\cdot):\ c\in C\right\rbrace$ and
$\left \lbrace f(\cdot,d):\ d\in D\right\rbrace$
are normal. We now define two functions $f_1$ on $\overline{A}\times V$
and $f_2$ on $U\times \overline{B}$ as follows.

For any $z\in \overline{A},$ choose a sequence $\left(
c_n\right)_{n=1}^{\infty}\subset C$ such that $\lim_{n\to\infty} c_n=z$
and the sequence $\left(f(c_n,\cdot)\right)_{n=1}^{\infty}$ converges uniformly on compact subsets of $V.$
We let
\begin{equation*}
f_1(z,w):=\lim_{n\to\infty} f(c_n,w),\qquad \text{for all}\ w\in V.
\end{equation*}
Similarly, for any $w\in \overline{B},$ choose a sequence $\left(
d_n\right)_{n=1}^{\infty}\subset D$ such that $\lim_{n\to\infty} d_n=w$
and the sequence $\left(f(\cdot,d_n)\right)_{n=1}^{\infty}$ converges uniformly on compact subsets of $U.$
We let
\begin{equation*}
f_2(z,w):=\lim_{n\to\infty} f(z,d_n),\qquad \text{for all}\ z\in U.
\end{equation*}
We first check that $f_1$ and $f_2$ are well-defined. Indeed, it suffices
to verify this for $f_1$ since the same argument also applies to $f_2.$
Let $\left(
c^{'}_n\right)_{n=1}^{\infty}\subset C$ be another sequence such that $\lim_{n\to\infty} c^{'}_n=z$
and the sequence $\left(f(c_n,\cdot)\right)_{n=1}^{\infty}$ converges uniformly on compact subsets of
$V.$ Since for all $b\in B,$
\begin{equation*}
\lim_{n\to\infty}f(c_n,b)=f(z,b)=\lim_{n\to\infty}f(c^{'}_n,b),
\end{equation*}
and since $B$ is the set of unicity for holomorphic functions on $V,$ our  claim
follows.

One next verifies that $f_1=f_2$ on $\overline{A}\times\overline{B}.$
Indeed, let $z\in\overline{A},\ w\in\overline{B}$ and let $\left(
c_n\right)_{n=1}^{\infty}\subset C,$  $\left(
d_n\right)_{n=1}^{\infty}\subset D$ be as above.
Then clearly, we have
\begin{equation*}
f_1(z,w)=\lim_{n\to\infty}f(c_n,d_n)=f_2(z,w).
\end{equation*}
We are now able to define a function $\tilde{f}$ on $\X(\overline{A},\overline{B};U,V)$
by  the formula $f=f_1$ on $\overline{A}\times V$
and  $f=f_2$ on $U\times\overline{B}.$ It follows from the construction of $f_1$ and
$f_2$ that  $\tilde{f}\in\mathcal{O}_s(  \X(\overline{A},\overline{B};U,V)          ).$

One next checks that $\tilde{f}=f$ on $X.$ Indeed, since for each $a\in
A,$ $f(a,\cdot)$ and $\tilde{f}(a,\cdot)$ are holomorphic, it suffices to
verify that $\tilde{f}(a,d)=f(a,d).$ But the latter equality follows
easily from the definition of $f_1$ and the hypothesis.

Finally, one applies the classical cross theorem (cf. \cite{nz}, \cite{jp1}) to $\tilde{f}\in\mathcal{O}_s
\left(\X(\overline{A},\overline{B};U,V)\right ),$ thus the existence of $\hat{f}$ follows.
The unicity of $\hat{f}$ is also clear.
\end{proof}
\begin{lem} (Rothstein type theorem, cf. \cite{ro}).
Let $f\in \mathcal{O}(E^p\times E^q).$ Assume that $A\subset E^p$ such that
for all open subsets $U\subset E^p,$ $A\cap U$ is
of Baire category II and for
all  $z\in E^p$ we have $(P_f)(z,\cdot)\not=E^q.$ Here $P_f$ denotes the
pole set of $f.$
Let $G\subset \C^q$ be a domain such that $E^q\subset G$
and assume that for all $a\in A,$ the function $f(a,\cdot)$ extends meromorphically to
$\widetilde{f(a,\cdot)} \in \mathcal{M}(G).$
Then for any relatively compact subdomain $\widetilde{G}\subset G,$
there are an open dense set $\mathcal{A}\subset E^p$ and a function
$\tilde{f}\in\mathcal{M}(\Omega),$ where $\Omega:=E^p\times E^q \cup \mathcal{A}\times\widetilde{G}
$ such that $\tilde{f}=f$ on $E^p\times E^q.$ \end{lem}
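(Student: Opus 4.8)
The plan is to combine the classical Rothstein extension theorem (in the form cited as \cite{ro}) with a Baire-category argument to localize the set of ``good'' slices. First I would observe that the hypotheses are tailor-made for a slice-wise application: for each $a\in A$ we are given a meromorphic extension $\widetilde{f(a,\cdot)}\in\mathcal{M}(G)$, and $\widetilde{G}\Subset G$. The classical Rothstein theorem says, roughly, that if $f\in\mathcal{O}(E^p\times E^q)$ has the property that the slice $f(a,\cdot)$ extends meromorphically past $E^q$ for $a$ ranging over a set that is ``thick enough'' in the base, and a pole-set condition holds, then $f$ itself extends meromorphically to a neighborhood determined by the common extension domain. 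The first step is therefore to recall the precise local statement of Rothstein's theorem and to check that the condition $(P_f)(z,\cdot)\neq E^q$ for all $z\in E^p$ together with $A\cap U$ of Baire category II (for every open $U\subset E^p$) supplies exactly the hypotheses needed.

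Next I would carry out the extension in two stages. Stage one: fix a point $z_0\in E^p$ and a small polydisc $P\subset E^p$ around it. Because $A\cap P$ is of Baire category II and the slices over $A$ extend to $\widetilde{G}$, Rothstein's theorem (applied on $P\times E^q$, with target the relatively compact $\widetilde{G}$) produces a meromorphic function on $P'\times\widetilde{G}$ for some possibly smaller polydisc $P'\subset P$, agreeing with $f$ on $P'\times E^q$. Stage two: the local extensions obtained over a covering of $E^p$ by such polydiscs agree on overlaps with $f$ on the common piece $\cdot\times E^q$, hence agree as meromorphic functions (the set $\cdot\times E^q$ is a set of uniqueness for meromorphic functions on the relevant product, since $E^q$ is nonempty open in $\C^q$). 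Patching them yields a meromorphic function on an open set of the form $\mathcal{A}\times\widetilde{G}$ where $\mathcal{A}$ is the union of the good polydiscs $P'$; then set $\Omega:=(E^p\times E^q)\cup(\mathcal{A}\times\widetilde{G})$ and let $\tilde f$ be $f$ on $E^p\times E^q$ and the patched function on $\mathcal{A}\times\widetilde{G}$, consistently defined on the overlap.

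It remains to see that $\mathcal{A}$ can be taken open and dense in $E^p$. Openness is automatic since $\mathcal{A}$ is a union of polydiscs. For density, I would argue that the ``bad'' set where Rothstein's theorem fails to give a local extension is itself of Baire category I (or at least nowhere dense): at any point $z_0\in E^p$ one can find an arbitrarily small polydisc $P$ on which the category-II hypothesis for $A$ persists and the pole-slice hypothesis is inherited, so Rothstein applies on $P$ and contributes a neighborhood of points of $P$ to $\mathcal{A}$; hence $\mathcal{A}$ meets every nonempty open subset of $E^p$. Finally one checks $\mathcal{M}(\Omega)$ makes sense, i.e. that $\Omega$ is open (clear) and that $\tilde f$ is genuinely meromorphic across the ``seam'' $\mathcal{A}\times(\widetilde{G}\cap E^q)$ where the two definitions overlap — this holds because on that overlap both definitions agree with the slice extensions $\widetilde{f(a,\cdot)}$, which are meromorphic in the $w$-variable, and the already-constructed patched function is meromorphic in both variables jointly.

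The main obstacle I anticipate is the precise bookkeeping of which form of Rothstein's theorem to invoke and verifying its hypotheses locally: in particular, ensuring that the pole-set condition $(P_f)(z,\cdot)\neq E^q$ is exactly what is needed to prevent the extension from degenerating (a slice being identically a pole), and confirming that ``Baire category II on every open subset'' is the right local hypothesis to feed into Rothstein's theorem on small polydiscs rather than just on $E^p$ as a whole. Once the correct local statement is pinned down, the patching and the density of $\mathcal{A}$ are routine, relying only on the uniqueness of meromorphic continuation and the Baire category theorem.
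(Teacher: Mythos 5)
Your plan hides the main difficulty inside the phrase ``Rothstein's theorem (applied on $P\times E^q$, with target the relatively compact $\widetilde{G}$) produces a meromorphic function on $P'\times\widetilde{G}$.'' No classical form of Rothstein's theorem does this. The statement one actually has available (Rothstein \cite{ro}, or the treatment in \cite{si}) is a Hartogs--Kontinuit\"atssatz for meromorphic functions: it extends $f$ from $P\times E^q$ to (a dense open subset of $P$ times) a slightly larger \emph{concentric polydisc} $\Delta_0(R)$, $R>1$ --- this is precisely case (1) of the paper's proof. Reaching an arbitrary relatively compact subdomain $\widetilde{G}$ of an arbitrary domain $G\supset E^q$ is not a single application of that theorem; it requires propagating the local statement through $G$. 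The paper does this by fixing $a\in E^p$, $r>0$ and considering the set $B$ of points $b\in G$ over which an extension exists on $A_b\times\Delta_b(r_b)$ for some dense open $A_b\subset\Delta_a(r_b)$; one shows $B$ is open and (using case (1) together with the category-II hypothesis on $A$, which survives the repeated shrinking of the base) closed, hence $B=G$ by connectedness. The relative compactness of $\widetilde{G}$ is then needed to cover $\widetilde{G}$ by finitely many $\Delta_b(r_b)$ so that the intersection of the corresponding base sets is still open and nonempty, after checking that the local extensions $f_b$, $f_{b'}$ agree on overlaps. None of this propagation, nor the reason $\widetilde{G}$ must be relatively compact, appears in your proposal; as written, you are assuming a global version of Rothstein's theorem that is essentially equivalent to the lemma itself.

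The parts of your plan that do survive are the final, easier steps: the compatibility of the local extensions via uniqueness of meromorphic continuation (since they all agree with the slice extensions $\widetilde{f(\alpha,\cdot)}$ for $\alpha$ in a category-II set), the definition of $\mathcal{A}$ as a union of the good base polydiscs $\mathcal{A}_{a,r}$, and its density in $E^p$ (which in the paper is immediate because a nonempty open $\mathcal{A}_{a,r}\subset\Delta_a(r)$ is produced for \emph{every} $a$ and $r$, not via a separate category argument on a ``bad set''). To repair the proof you would need to insert the open--closed argument in $G$ between your two stages.
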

 \begin{proof} We present a sketch of the proof.

(1) The case where $G:=\Delta_0(R)\ (R>1).$

Arguing as in  the proof of Rothstein's theorem given in \cite{si}, the
conclusion of the lemma follows.

(2) The general case, where $G$ is arbitrary.

Fix an $a\in E^p$ and $r>0.$ Let $B$ denote the set of all $b\in G$ such
that there exist $0<r_b<r,$ an open dense $A_b$ of $\Delta_a(r_b)$ and
$f_b\in\mathcal{M}\left(A_b\times \Delta_b(r_b)\right)$ such
that  for all $\alpha\in A\cap A_b,$ $f_b(\alpha,\cdot)
=\widetilde{f(\alpha,\cdot)}$ on $\Delta_b(r_b).$

Obviously, $B$ is open. Using the case (1) and the hypothesis on $A,$
one can show that $B$ is closed in $G.$ Thus $B=G.$
Moreover, one can also show that if $A_b\cap A_{b^{'}}\not=\varnothing$
and $\Delta_b(r_b)\cap\Delta_{b^{'}}(r_{b^{'}})\not=\varnothing,$
then $f_b=f_{b^{'}}$ on  $\left(A_b\cap A_{b^{'}}\right)\times
\left(\Delta_b(r_b)\cap\Delta_{b^{'}}(r_{b^{'}})\right).$ Therefore,
using the hypothesis that $\widetilde{G}$ is relatively compact,
we see that for any $a\in E^p$ and any $r>0,$ there is an open set
$\mathcal{A}_{a,r}\subset \Delta_a(r)$ and  $f_{a,r}\in\mathcal{M}\left(
\mathcal{A}_{a,r}\times \widetilde{G}\right)$
such that
 for all $\alpha\in A\cap \mathcal{A}_{a,r},$ $f_{a,r}(\alpha,\cdot)
=\widetilde{f(\alpha,\cdot)}$ on $\widetilde{G}.$

Finally, let $\mathcal{A}:=\bigcup_{a\in E^n,\ r>0} \mathcal{A}_{a,r}.$
This open set is clearly dense in $E^p.$  By gluing the function $f_{a,r}$ together, we obtain the desired
meromorphic extension $\tilde{f}\in\mathcal{M}(\Omega);$ so the proof
of the lemma is  completed.
\end{proof}
\section{Proof of Theorem D} 
We will only give the proof of Theorem D for the case where $f$ is
separately meromorphic. Since the case where $f$ is separately holomorphic is
quite similar and in some sense simpler, it is therefore left to the reader.

\smallskip

\noindent{\bf Proof of Part (ii).}

 Put
\begin{equation}
\mathcal{A}_j:=\left\lbrace z_j\in D_j\ :\quad \inte_{\C^{n_j}} S(z_j,\cdot)
=\varnothing\right\rbrace  \qquad\text{for}\ j=1,2.
\end{equation}
By Lemma 4.1,  $D_j\setminus \mathcal{A}_j$ is of Baire category I.
For $a_j\in\mathcal{A}_j\  (j=1,2),$ let $ \widetilde{f(a_1,\cdot)}   $
(resp.  $ \widetilde{f(\cdot,a_2)}   $) denote the meromorphic extension of $f(a_1,\cdot)$
(resp. $f(\cdot,a_2)$) to $D_2$
(resp. to $D_1$).

Let $\mathcal{U}\subset D_1,$ $\mathcal{V}\subset D_2$ be arbitrary open
sets.
 For  a relatively compact pseudoconvex subdomain $V$ of $\mathcal{V}$ and for  a positive number $K,$
   let
$Q^1_{V,K}$ denote the set of $a_1\in \mathcal{A}_1\cap \mathcal{U}$ such that
$\sup_{V}\left\vert \widetilde{f(a_1,\cdot) }  \right\vert\leq K$
(and thus $\widetilde{f(a_1,\cdot)}\in\mathcal{O}(V)$).
By virtue of (5.1)
and the hypothesis,  a countable number of the $Q^1_{V,K}$ cover $\mathcal{A}_1\cap \mathcal{U}.$
Since the latter set is of Baire category II,
we can choose $V,K_1$ such that  the closure $\overline{Q^1_{V,K_1}}$
contains  a polydisc $\Delta_1\subset \mathcal{U}$ and
$Q^1_{V,K_1}\cap\Delta_1 $ is of Baire category II in $\Delta_1.$

For  a relatively compact pseudoconvex subdomain $U$  of $\Delta_1$ and for  a positive number $K,$ we
denote
by $Q^2_{U,K}$  the set of $a_2\in \mathcal{A}_2\cap V$ such that
$\sup_{U}\left\vert \widetilde{f(\cdot,a_2) }  \right\vert\leq K$
(and thus $\widetilde{f(\cdot,a_2)}\in\mathcal{O}(U)$).
By virtue of (5.1) and the hypothesis, a countable number of the $Q^2_{U,K}$ cover $\mathcal{A}_2\cap
  V.$ Since  the latter set is of Baire category II,
we can choose $U,K_2$ such that  $\overline{Q^2_{U,K_2}}$
contains  a polydisc $\Delta_2\subset V$ and
$Q^2_{U,K_2}\cap\Delta_2 $ is of Baire category II in $\Delta_2.$

Now let $K:=\text{max}\{K_1,K_2\}, A:=\mathcal{A}_1\cap U,\ C:=Q^1_{V,K}\cap U,\
B:=\mathcal{A}_2\cap \Delta_2,\ D:=Q^2_{V,K}\cap\Delta_2.$ Then it is easy
to see that $\overline{A}=\overline{C}=U$ and
$\overline{B}=\overline{D}=\Delta_2.$ Moreover, all other hypotheses of
Lemma 4.3 are fulfilled.  Consequently, an application of this lemma
gives the following.

{\it Let $\ \mathcal{U}\subset D_1,$ $\mathcal{V}\subset D_2$ be arbitrary open
sets. Then there is  a polydisc $\Delta_{a}(r)\subset \mathcal{U}\times\mathcal{V}$
 and a function $\hat{f}\in \mathcal{O}(\Delta_{a}(r))$
such that  $\hat{f}=f$ on $(T\setminus S)\cap\Delta_{a}(r).$}

Write $a=(a_1,a_2)\in D_1\times D_2.$ Since the set
$\mathcal{A}_j\cap\Delta_{a_j}(r)$ is of Baire category II and
by replacing $\Delta_{a_j}(r)$ by  a smaller polydisc, we  see that this set
 satisfies the hypothesis of Lemma 4.4.
Consequently, an application of this lemma gives $f^1_a\in
\mathcal{M}(\Delta_{a_1}(r)\times\Omega_2)$ and $f^2_a\in
\mathcal{M}(\Omega_1\times\Delta_{a_2}(r))$ which coincide with $f$
on $(T\setminus S)\cap\Delta_a(r).$ Moreover, one sees
that the function $f_{\mathcal{U},\mathcal{V}}$ given  by
\begin{equation*}
f_{\mathcal{U},\mathcal{V}}:=f^1_a\ \text{on}\
\Delta_{a_1}(r)\times\Omega_2,\ \ \text{and}\
f_{\mathcal{U},\mathcal{V}}:=f^2_a\ \text{on}\
\Omega_1\times\Delta_{a_2}(r),
\end{equation*}
is well-defined, meromorphic on the
cross $X:=\X(\Delta_{a_1}(r),\Delta_{a_2}(r);\Omega_1,\Omega_2),$
and $f_{\mathcal{U},\mathcal{V}}=f $ on $(T\setminus S)\cap X.$

Using Remark 4.2, one can also prove the following. If $\mathcal{U}^{'}\subset D_1,$ $\mathcal{V}^{'}
\subset D_2$ are arbitrary open sets   and $ f_{\mathcal{U}^{'},\mathcal{V}^{'}}$ is the corresponding
meromorphic function defined on the corresponding cross $X^{'},$ then
$f_{\mathcal{U},\mathcal{V}}=f_{\mathcal{U}^{'},\mathcal{V}^{'}}$
on $X\cap X^{'}.$

Let $A_j:=\bigcup_{\mathcal{U}\subset \Omega_1,\ \mathcal{V}\subset
\Omega_2} \Delta_{a_j}(r),$ for $j=1,2.$ It is clear that $A_j$ is an open dense set
in $\Omega_j.$  Then  gluing  all
$f_{\mathcal{U},\mathcal{V}},$ we obtain a function
$\widehat{f}$ meromorphic on $X:=\X(A_1,A_2;\Omega_1,\Omega_2)$
satisfying $\widehat{f}=f $ on $(T\setminus S)\cap X.$
Finally, one applies Theorem 1.3 in \cite{jp4} to $\widehat{f},$
and the conclusion of Part (ii) follows.

\smallskip

\noindent{\bf Proof of Part (i).}

  In the sequel, $\Sigma_M$ will denote the group of
permutation of $M$ elements $\lbrace 1,\ldots,M\rbrace.$ Moreover, for any
$\sigma\in\Sigma_M$ and under the hypothesis and the notation  of Lemma 4.1,
we define
\begin{equation*}
S^{\sigma}:=\left\lbrace z^{\sigma}:\ z\in S\right\rbrace\ \
\text{and}\
\Omega^{\sigma}:=\Omega_{\sigma(1)}\times\cdots\times\Omega_{\sigma(M)},
\end{equation*}
where
\begin{equation*}
z^{\sigma}:=(z_{\sigma(1)},\ldots,z_{\sigma(M)}),\quad\
z\in\Omega=\Omega_1\times\cdots\times\Omega_M.
\end{equation*}
If in the statement of Lemma 4.1, one replaces $S$ by $S^{\sigma}$ and
$\Omega$ by $\Omega^{\sigma},$ then one obtains
$\mathcal{S}^{\sigma},$ $S^{\sigma}(a_{\sigma(N)}),\ldots,S^{\sigma}(a_{\sigma(3)},\ldots,a_{\sigma(N)}).$
The proof will be divided into three steps.\\
 {\bf Step 1: $N=2.$}

By virtue of Part (ii), for each pair of relatively compact
pseudoconvex subdomains $\Omega_j\subset D_j  \  (j=1,2)$
we obtain a polydisc $ \Delta_{\Omega_1,\Omega_2}\subset
\Omega_1\times\Omega_2$ and a function $
f_{\Omega_1,\Omega_2}\in\mathcal{O}(\Delta_{\Omega_1,\Omega_2})$
such that $f= f_{\Omega_1,\Omega_2}$ on $(\Delta_{\Omega_1,\Omega_2}\cap T)\setminus S.$
A routine identity argument shows that every two functions $
f_{\Omega_1,\Omega_2}$ coincide on the intersection of their domains of
definition. Gluing  $
f_{\Omega_1,\Omega_2},$ we  obtain the desired function $\hat{f}\in \mathcal{O} ( \bigcup
  \Delta_{\Omega_1,\Omega_2}       ).$      \\
{\bf  Step 2: $N=3.$}

Consider the following elements of $\Sigma_3.$
\begin{equation*}
\sigma_1:=\left(
\begin{array}{ccc}
1&2&3\\
2&1&3
\end{array}
\right),
\sigma_2:=\left(
\begin{array}{ccc}
1&2&3\\
1&2&3
\end{array}
\right),
\sigma_3:=\left(
\begin{array}{ccc}
1&2&3\\
3&2&1
\end{array}
\right),
\sigma_4:=\left(
\begin{array}{ccc}
1&2&3\\
3&1&2
\end{array}
\right).
\end{equation*}

Fix any subdomain $\Omega_1\times\Omega_2\times\Omega_3\subset D$ and
pick any $a_3\in \mathcal{S}^{\sigma_1}\cap \mathcal{S}^{\sigma_2}.$ Then
by the definition, $\Omega_1\setminus{S}^{\sigma_1}(a_3)$ (resp.
$\Omega_2\setminus{S}^{\sigma_2}(a_3)$) is of Baire category I in
$\Omega_1$ (resp. $\Omega_2$).

Also,
for any $a_1\in S^{\sigma_1}(a_3)\cap \mathcal{S}^{\sigma_3},$ we have
$\inte S(a_1,\cdot,a_3)=\varnothing$ and the set $\Omega_2\setminus
\left\lbrace a_2\in\Omega_2:\ \inte S(a_1,a_2,\cdot)=\varnothing\right\rbrace$
is of Baire category I.

Similarly, for any $a_2\in S^{\sigma_2}(a_3)\cap \mathcal{S}^{\sigma_4},$ we have
$\inte S(\cdot,a_2,a_3)=\varnothing$ and the set $\Omega_1\setminus
\left\lbrace a_1\in\Omega_1:\ \inte S(a_1,a_2,\cdot)=\varnothing\right\rbrace$
is of Baire category I.

Thus $f$ is well-defined on the union $X$ of the two following subsets
of $\Omega_1\times\Omega_2\times\lbrace a_3\rbrace:$
\begin{equation}
\left\lbrace (z_1,z_2,a_3):\quad \text{for any}\ z_1\in S^{\sigma_1}(a_3)\cap
\mathcal{S}^{\sigma_3},\
\text{and}\  z_2\in  S^{\sigma_3}(z_1)\cap S^{\sigma_2}(a_3)\right\rbrace
\end{equation}
and
\begin{equation}
\left\lbrace (z_1,z_2,a_3):\quad \text{for any}\ z_2\in S^{\sigma_2}(a_3)\cap
\mathcal{S}^{\sigma_4},\
\text{and}\  z_1\in  S^{\sigma_4}(z_2)\cap S^{\sigma_1}(a_3)\right\rbrace.
\end{equation}
Observe that by the definition in  Lemma 4.1, $\Omega_1\setminus (  S^{\sigma_4}(z_2)\cap S^{\sigma_1}(a_3)
)$ (resp.  $\Omega_2\setminus (  S^{\sigma_3}(z_1)\cap S^{\sigma_2}(a_3)
) $) is of Baire category I in $\Omega_1$ (resp. $\Omega_2$). By virtue of
(5.2)--(5.3), the same conclusion also holds for the fibers $X(z_1,\cdot,a_3)$
and $X(\cdot,z_2,a_3),$  $z_1\in S^{\sigma_1}(a_3)\cap
\mathcal{S}^{\sigma_3} $ (resp. $ z_2\in S^{\sigma_2}(a_3)\cap
\mathcal{S}^{\sigma_4} $).

Let $\mathcal{U}_j\subset\Omega_j\ (j=1,2) $ be an arbitrary open subset.
If $\Delta:=\Delta_q(r)$ is a polydisc, then we denote by $k\Delta$
the polydisc $\Delta_q(kr)$ for all $k>0.$ Repeating the Baire category argument already used in the proof of Part (ii),
one can show that there are a positive number $K,$  polydiscs
$\Delta_j\subset \mathcal{U}_j,$    and  subsets
$Q^{1}_{ \mathcal{U}_1,\mathcal{U}_2 }$ of  $S^{\sigma_1}(a_3)\cap
\mathcal{S}^{\sigma_3}$
(resp. $Q^{2}_{ \mathcal{U}_1,\mathcal{U}_2 }$ of  $S^{\sigma_2}(a_3)\cap
\mathcal{S}^{\sigma_4} $)
such that $\overline{Q^{j}_{\mathcal{U}_1,\mathcal{U}_2   }}=\Delta_j,$
  $ Q^{j}_{ \mathcal{U}_1,\mathcal{U}_2}$ is of Baire category II,
and $\sup_{2\Delta_2}\left\vert\widetilde{f(a_1,\cdot,a_3)}\right\vert \leq K,$
(resp. $\sup_{2\Delta_1}\left\vert\widetilde{f(\cdot,a_2,a_3)}\right\vert \leq K),$
for $a_j\in Q^{j}_{ \mathcal{U}_1,\mathcal{U}_2}.$

Therefore, by applying Lemma 4.2, we obtain a function
$f_{a_3}=f_{\mathcal{U}_1,\mathcal{U}_2,a_3}\in\mathcal{O}(\Delta_1\times\Delta_2)$
which extends $f(\cdot,\cdot,a_3)$ to $\Delta_1\times\Delta_2\times\lbrace a_3\rbrace$
for all $a_3\in\mathcal{S}^{\sigma_1}\cap\mathcal{S}^{\sigma_2}.$

Now let  $\mathcal{U}_j\subset\Omega_j\ (j=1,2,3) $ be an arbitrary open subset.
Since the set
$\Omega_3\setminus(\mathcal{S}^{\sigma_1}\cap\mathcal{S}^{\sigma_2})$ is of
Baire category I, by using the previous discussion  we are able to perform the Baire category argument
 already used in the proof of Part (ii). Consequently,
 there are  a positive number $K,$  polydiscs
$\Delta_j\subset \mathcal{U}_j,$    and  subsets
$Q^{3}_{ \mathcal{U}_1,\mathcal{U}_2,\mathcal{U}_3 }$ of  $\mathcal{S}^{\sigma_1}\cap\mathcal{S}^{\sigma_2} $
such that $\overline{Q^{3}_{\mathcal{U}_1,\mathcal{U}_2,\mathcal{U}_3   }}=\Delta_3,$
  $ Q^{3}_{ \mathcal{U}_1,\mathcal{U}_2,\mathcal{U}_3  }$ is of Baire category II,
and $\sup_{2\Delta_1\times 2\Delta_2}\left\vert f_{a_3}(\cdot,\cdot)\right\vert \leq K,$
for $a_3\in Q^{3}_{ \mathcal{U}_1,\mathcal{U}_2, \mathcal{U}_3  }.$

By changing the role of $1,2,3$ and by taking smaller polydiscs, we obtain in the same way the
subsets   $ Q^{j}_{ \mathcal{U}_1,\mathcal{U}_2,\mathcal{U}_3  }\subset \Delta_j\ (j=1,2)$
with similar property.

For $j\in\lbrace 1,2,3\rbrace$ consider the following subsets of $T$
\begin{multline*}
T_j:=\left\lbrace a=(a_1,a_2,a_3):\
a_k\in\Delta_k,a_l\in\Delta_l,a_j\in Q^j_{ \mathcal{U}_1,\mathcal{U}_2,
\mathcal{U}_3},\  \lbrace k,l,j\rbrace =\lbrace 1,2,3\rbrace\right.\\
\left.\text{and either}\ \inte_{\C^{n_l}} S(\cdot,a_k,a_j)=\varnothing\  \text{or}\   \inte_{\C^{n_k}}
 S(a_l\cdot,a_j)=\varnothing
\right\rbrace.\end{multline*}
One next proves that
\begin{equation}
f(a)=f_{a_j}(a_k,a_l),\qquad a\in T_1\cup T_2\cup T_3.
\end{equation}
Indeed, let $a=(a_1,a_2,a_3)\in T_3$ with $\inte
S(\cdot,a_2,a_3)=\varnothing.$ In virtue of (5.2), we can choose a sequence
$\left( z_1^n \right)_{n=1}^{\infty}\to a_1$ and for every $n\geq 1$
a sequence $\left( z_2^{m(n)} \right)_{m=1}^{\infty}\to a_2.$ Clearly,
$f(z_1^n,z_2^{m(n)},a_3)=f_{a_3}(z_1^n,z_2^{m(n)}).$
Therefore,
\begin{equation*}
f(a)=\lim_{n\to\infty}f(z_1^n,a_2,a_3)=\lim_{n\to\infty} \lim_{m\to\infty} f(z_1^n,z_2^{m(n)},a_3)
=f_{a_3}(a_1,a_2).
\end{equation*}

Now, we wish to glue the three functions $f_{a_j}\  (j=1,2,3).$
Since the family $\left\lbrace f_{a_j}:\ a_j\in
Q^{j}_{ \mathcal{U}_1,\mathcal{U}_2,\mathcal{U}_3  }\right\rbrace$ is
normal, we define an extension $f_j$ of  $f_{a_j}\  (j=1,2,3)$ to $\Delta:=\Delta_1\times\Delta_2
\times\Delta_3$ as follows.

Let $\lbrace j,k,l\rbrace\in\lbrace 1,2,3\rbrace$ and for
$z=(z_1,z_2,z_3)\in\Delta,$
choose a sequence $\left(a_{j}^n\right)_{n=1}^{\infty} \subset
 Q^j_{ \mathcal{U}_1,\mathcal{U}_2,\mathcal{U}_3  }$ such that
$\lim_{n\to\infty}a_{j}^n=z_j     $
and the sequence $\left(f_{a_j}\right)_{n=1}^{\infty}$ converges uniformly
on compact subsets of $\Delta_k\times\Delta_l.$ We let
\begin{equation}
f_j(z):=\lim_{n\to\infty} f_{a_j^n}(a_k^n,a_l^n),
\end{equation}
for any sequence $\left((a_k^n,a_l^n,a_j^n)\right)_{n=1}^{\infty}\subset
T_j\to z$ as $n\to\infty.$

Let us first check that the functions $f_j$ are well-defined. Indeed, this
assertion will follow from the estimate
\begin{equation}
\left\vert f_{a_j}(a_k,a_l) -f_{b_j}(b_k,b_l) \right\vert
\leq CK\vert a-b\vert,\qquad a=(a_k,a_l,a_j),b=(b_k,b_l,b_j)\in T_j.
\end{equation}
 Here $C$ is a constant that depends only on $\Delta.$
 It now remain to prove (5.6) for example in the case $j=3.$
 To do this, let $z=(z_1,z_2,z_3), w=(w_1,w_2,w_3)\in T_3.$
 Then by virtue of (5.2) and (5.3), one can choose $a_1,a^{'}_1\in\Delta_1$ and $a_2\in\Delta_2$
  such that
 \begin{equation}
 \begin{split}(a_1,a_2,z_3),(a^{'}_1,a_2,z_3)\in T_2\cap T_3,\qquad
  \left\vert z-(a_1,a_2,z_3) \right\vert \leq 2\vert z-w\vert,\\
\vert z-(a^{'}_1,a_2,z_3) \vert \leq 2\vert z-w\vert.
\end{split}\end{equation}
Write
\begin{multline*}
\left\vert f_3(z)-f_3(w)   \right\vert\leq
\left\vert f_3(z)-f_3(a_1,a_2,z_3)   \right\vert
+\left\vert f_3(a_1,a_2,z_3) -f_3(w)  \right\vert\\
+\left\vert f_2(a_1,a_2,z_3)-f_2(a^{'}_1,a_2,z_3)   \right\vert.
\end{multline*}
Since $\sup_{2\Delta_1\times 2\Delta_2}\vert f_{z_3}\vert\leq K,
\sup_{2\Delta_1\times 2\Delta_2}\vert f_{w_3}\vert\leq K$
and $\sup_{2\Delta_1\times\Delta_3}\vert f_{a_2}\vert\leq K,$
applying Schwarz's lemma to the right side of the latter estimate and using  (5.7), the desired estimate (5.6) follows.

From the  construction (5.5) above,
$f_j(\cdot,\cdot,z_j)\in\mathcal{O}(\Delta_k\times\Delta_l).$ Moreover, a
routine identity argument using (5.2) and (5.3) shows that $f_1=f_2=f_3.$
Finally, define
\begin{equation*}
\hat{f}_{ \mathcal{U}_1,\mathcal{U}_2,\mathcal{U}_3  }(z)=f_1(z)=f_2(z)=f_3(z),\qquad z\in\Delta,
\end{equation*}
then $\hat{f}_{ \mathcal{U}_1,\mathcal{U}_2,\mathcal{U}_3  }$ extends $f$ holomorphically from  $T_1\cup T_2\cup T_3$ to
$\Delta.$ A routine identity argument as in (5.?) shows that
$\hat{f}_{ \mathcal{U}_1,\mathcal{U}_2,\mathcal{U}_3  }=f$ on $(T\cap\Delta)\setminus S.$ Gluing $\hat{f}$
for all $\mathcal{U}_1,\mathcal{U}_2,\mathcal{U}_3,$ we obtain the desired
extension function $\hat{f}.$ Hence the proof is complete in this case.
\\
{\bf Step 3: $N\geq 4.$}

The general case uses induction on $N.$ Since the proof is very similar to
the case $N=3$ making use of Lemmas 4.1 and 4.3 and using the inductive hypothesis
for $N-1,$ we leave the details to
the reader. \hfill  $\square$

\end{document}